\def\draftdate{\today}
\newcommand{\ssdot}{\bullet}
\newcommand{\seqdot}{\bullet,\dotsc,\bullet}
\newcommand{\supdot}{^\ssdot}
\newcommand{\subdot}{_\ssdot}
\newcommand{\splus}{\Sigma^{\infty}_{+}}
\newcommand{\thick}[2][{S}]{\aT_{#2}(#1)}
\newcommand{\model}[2][{S}]{\aM_{#2}(#1)}
\newcommand{\opmodel}[2][{S}]{\aM^{\op}_{#2}(#1)}
\newcommand{\baC}{{\overline\aC}}
\newcommand{\baM}{{\overline\aM}}
\newcommand{\SPdual}{SP^{\vee}}
\let\overto\xrightarrow
\newcommand{\Spdot}[1][\ssdot]{S'_{#1}}
\newcommand{\Sdot}[1][\ssdot]{S_{#1}}
\newcommand{\Sdotmac}[2]{S^{(#1)}_{#2}}
\newcommand{\Sdotq}[1][\seqdot]{\Sdotmac{q}{#1}}
\def\fixdepth{\vrule height1em width0pt depth1ex}
\let\iso\cong
\let\sma\wedge
\renewcommand{\to}{\mathchoice{\longrightarrow}{\rightarrow}{\rightarrow}{\rightarrow}}
\newcommand{\from}{\mathchoice{\longleftarrow}{\leftarrow}{\leftarrow}{\leftarrow}}
\let\catsymbfont\mathcal
\newcommand{\aB}{{\catsymbfont{B}}}
\newcommand{\aC}{{\catsymbfont{C}}}
\newcommand{\aD}{{\catsymbfont{D}}}
\newcommand{\aM}{{\catsymbfont{M}}}
\newcommand{\aQ}{{\catsymbfont{Q}}}
\newcommand{\aT}{{\catsymbfont{T}}}
\newcommand{\bZ}{{\mathbb{Z}}}
\def\quickop#1{\expandafter\DeclareMathOperator\csname
#1\endcsname{#1}}
\newcommand{\w}{\mathrm{w}}
\newcommand{\op}{\mathrm{op}}
\numberwithin{equation}{section}
\newtheorem{thm}[equation]{Theorem}
\newtheorem*{thm*}{Theorem}
\newtheorem{cor}[equation]{Corollary}
\newtheorem{lem}[equation]{Lemma}
\newtheorem{prop}[equation]{Proposition}
\theoremstyle{definition}
\newtheorem{defn}[equation]{Definition}
\newtheorem{cons}[equation]{Construction}
\newtheorem{notn}[equation]{Notation}
\theoremstyle{remark}
\newcommand{\term}[1]{\textit{#1}}
\begin{document}

\title[Derived Koszul Duality and Involutions]
{Derived Koszul Duality and Involutions in the Algebraic $K$-Theory of Spaces}

\author{Andrew J. Blumberg}
\address{Department of Mathematics, The University of Texas,
Austin, TX \ 78712}
\email{blumberg@math.utexas.edu}
\thanks{The first author was supported in part by NSF grant DMS-0906105}
\author{Michael A. Mandell}
\address{Department of Mathematics, Indiana University,
Bloomington, IN \ 47405}
\email{mmandell@indiana.edu}
\thanks{The second author was supported in part by NSF grant DMS-0804272}

\date{\draftdate}
\subjclass[2000]{Primary 19D10; Secondary 18F25}

\begin{abstract}
We interpret different constructions of the algebraic $K$-theory of
spaces as an instance of derived Koszul (or bar) duality and also as an
instance of Morita equivalence. We relate the interplay between these
two descriptions to the homotopy involution.  We define a geometric
analog of the Swan theory $G^{\bZ}(\bZ[\pi])$ in terms of $\splus
\Omega X$ and show that it is the algebraic $K$-theory of the
$E_{\infty}$ ring spectrum $DX=S^{X_{+}}$.
\end{abstract}

\maketitle

\section{Introduction}

Associated to a space $X$ are two ring spectra, $\splus \Omega X$, the
free suspension spectrum on the based loop space of $X$, and
$DX=S^{X_{+}}$, the Spanier-Whitehead dual of $X$.
Waldhausen~\cite{Wald} defined 
the algebraic $K$-theory of $X$, $A(X)$, as the $K$-theory of the
ring spectrum $\splus \Omega X$.  This theory has deep geometric
content: when $X$ is a manifold, $A(X)$ contains the stable
pseudo-isotopy theory of $X$, and when $X$ is a finite complex, $A(X)$
is a receptacle for ``higher torsion invariants''
\cite{DwyerWeissWilliams} and closely related to transfers
\cite{KleinWilliams}.  On the other hand, the unstable homotopy theory
of $X$ is encoded in the $E_{\infty}$ ring spectrum $DX$
\cite{Mandell}.  Recent work of Morava~\cite{Morava} conjectures the
structure and properties for a category of homotopy theoretic motives
in terms of the stabilization of a category of correspondences; one
candidate construction put forward is built from algebraic $K$-theory
of ring spectra of the form $DX$.

Based on computations in $THH$ motivated by string topology, Ralph
Cohen conjectured a duality between $K(DX)$ and
$A(X)$ as modules over $K(S)$ \cite{CohenVoronov}.  Although the
non-connectivity of $DX$ means that trace methods fail to apply to
$K(DX)$, in this paper we construct such a duality in terms of derived
Koszul duality when $X$ is a simply connected finite CW complex.

In differential graded algebra, derived Koszul duality (or bar
duality) concerns the contravariant adjunction between the category of
augmented differential graded algebras and itself
\cite{Moore-Nice,HMS} (named for the special case of Koszul algebras
\cite{Priddy, BGS, BGSo}).
The dual of an augmented differential graded $k$-algebra 
$A$ is an augmented differential graded $k$-algebra $E$ that models
the $A$-module endomorphisms of $k$, $\End_{A}(k,k)$.  Under mild
hypotheses, $A\simeq \End_{E}(k,k)$; the contravariant functors
$\Hom_{A}(-,k)$ and $\Hom_{E}(-,k)$ form an adjunction on the module
categories and an equivalence between various thick subcategories of
the derived categories.

In our context, $\splus \Omega X$ forms an augmented $S$-algebra, and
we can identify the augmented $S$-algebra of $\splus\Omega
X$-endomorphisms of $S$ as the augmented $S$-algebra $DX$
\cite[\S4.22]{DwyerGreenleesIyengar}.  In fact, the coherent
$\splus\Omega X$-module 
equivalence $S\sma S\simeq S$ makes the endomorphism ring spectrum
naturally commutative, compatibly with the natural commutative
$S$-algebra structure on $DX$.  Interpreting $\Ext_{\splus \Omega
X}(-,S)$ and $\Ext_{DX}(-,S)$ as contravariant adjoints on derived
categories
\[
\xymatrix{%
\aD_{\splus \Omega X}\ar[r]<.75ex>\fixdepth
&\aD_{DX}\mathstrut\ar[l]<.75ex>\fixdepth,
}%
\]
we get equivalences upon restricting to certain subcategories.  For
example, the subcategory $\aD^{c}_{\splus \Omega X}$ of compact
objects of $\aD_{\splus \Omega X}$ is the thick subcategory generated
by $\splus \Omega X$ and is equivalent under this adjunction to the
thick subcategory $\thick{DX}$ of $\aD_{DX}$ generated by $S$.
This is reminiscent of Waldhausen's comparison of the stable category
of $\Omega X$-spaces with the stable category of retractive spaces
over $X$.   Writing $\model{DX}$ for the subcategory of the model
category of $DX$-modules that are isomorphic in $\aD_{DX}$ to objects
in $\thick{DX}$, we prove the following theorem.  In the following
theorem and all theorems in this section, we understand
$X$ to be a simply connected finite CW-complex. 

\begin{thm}\label{maina}
$K(\model{DX})$ is weakly equivalent to $A(X)=K(\splus \Omega X)$.
\end{thm}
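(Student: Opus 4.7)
The plan is to realize both sides as Waldhausen $K$-theories of categories of perfect modules and then transfer between them via the Koszul duality recalled in the introduction, promoted from a triangulated equivalence to an equivalence of Waldhausen (or stable $\infty$-) categories.

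I would begin by invoking Waldhausen's identification: for any (cofibrant) $S$-algebra $R$, its algebraic $K$-theory $K(R)$ agrees with the Waldhausen $K$-theory of $\model{R}$, namely the cofibrant $R$-modules whose image in the derived category lies in the thick subcategory generated by $R$ (equivalently, the perfect or compact modules), with cofibrations and weak equivalences inherited from the stable model structure. This gives $A(X) = K(\splus\Omega X) \htp K(\model{\splus\Omega X})$ directly from Waldhausen's definition.

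Next I would use the Koszul duality from the introduction. The contravariant adjunction $\Ext_{\splus\Omega X}(-,S)$ and $\Ext_{DX}(-,S)$ is derived from an honest (contravariant) Quillen adjunction given on the point-set level by $\Hom(-,S)$ on each side, and under the simply-connected finite hypothesis on $X$ it induces an equivalence on homotopy categories $\aD^{c}_{\splus\Omega X} \htp \thick{DX}^{\op}$. The crucial step is to promote this triangulated equivalence to an equivalence of Waldhausen structures on $\model{\splus\Omega X}$ and $\model{DX}^{\op}$, equivalently of the underlying stable $\infty$-categories of compact objects.

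The main obstacle lies in exactly this promotion, since a bare triangulated (or derived) equivalence does not in general induce a $K$-theory equivalence. However, both thick subcategories are generated by a single compact object ($\splus\Omega X$ on one side, $S$ as a $DX$-module on the other), and the equivalence is not ad hoc: it comes from a genuine (contravariant) Quillen adjunction whose derived unit and counit are the Koszul-duality maps $A \to \End_{E}(S,S)$ and $E \to \End_{A}(S,S)$ discussed in the introduction. Morita-theoretic machinery of Schwede--Shipley type, together with the recognition that $\splus\Omega X$ and $S$ are compact generators of the respective thick subcategories, should then deliver the required equivalence of Waldhausen categories. Combined with Waldhausen's invariance of $K$-theory under exact equivalences and the equality $K(\aC) \htp K(\aC^{\op})$ (which holds for a stable $\infty$-category or for a Waldhausen category with the dual cofibration structure), this yields
\[
K(\model{DX}) \htp K(\model{DX}^{\op}) \htp K(\model{\splus\Omega X}) \htp A(X).
\]
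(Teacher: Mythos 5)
Your overall architecture is the right one, and you correctly locate the difficulty: the passage from a derived (triangulated) equivalence to an equivalence on $K$-theory. But that passage is exactly the content of the proof, and your proposal does not supply it. The machinery you invoke does not do the job: Schwede--Shipley Morita theory produces \emph{covariant} Quillen equivalences of module categories from a compact generator, whereas the Koszul duality functors here are contravariant, and on the point-set level $F_{DX}(-,\SPdual)$ carries cofibrant $DX$-modules to \emph{fibrant} $R$-modules and cofibrations to fibrations. So it is not an exact functor $\model{DX}\to\aM^{c}_{R}$ of Waldhausen categories, and there is no directly induced map of $K$-theory spectra to which one could apply Waldhausen's approximation or invariance theorems. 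Saying the machinery ``should then deliver the required equivalence'' is precisely the gap. The paper resolves it by interpolating: for each multisimplicial degree it forms the category of triples $(A,B,\phi)$ with $A$ in the iterated $\Sdot$-construction on $\aM^{c}_{R}$, $B$ in that on $\model{DX}$ (with reversed simplicial ordering), and $\phi$ a weak equivalence $A_{*}\to F_{DX}(B_{*},\SPdual)$; the two forgetful projections out of the resulting spectrum $U$ are shown to be weak equivalences onto $A(X)$ and $K(\model{DX})$ using Quillen's Theorem~A together with the Dwyer--Kan contractibility of categories of cofibrant approximations (Lemma~\ref{lemquila}) and a compatibility of the derived equivalence with the $\Sdot$-diagram categories (Lemma~\ref{lemcompSdot}).

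Your alternative exit through opposite categories is not hopeless --- the paper itself uses $\aM^{\op,c}_{R}$ with cofibrations opposite to Hurewicz fibrations in Section~\ref{secinvolution}, where the contravariant duality functors \emph{are} exact into that structure --- but to make it carry Theorem~\ref{maina} you would have to (i) verify that $F_{DX}(-,\SPdual)$ is exact as a functor into the opposite Waldhausen category and lands in the right subcategory, (ii) check the hypotheses of the approximation theorem (including the approximation property on the point-set level, not just an equivalence of homotopy categories), and (iii) justify $K(\aC)\htp K(\aC^{\op})$ in this Waldhausen setting, which is a theorem of \cite{BlumbergMandell}, not a formality. None of these steps appears in your write-up. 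Finally, you take the derived equivalence $\aD^{c}_{\splus\Omega X}\simeq\thick{DX}$ as given; it rests on the double centralizer statement $DX\htp\End_{\splus\Omega X}(S,S)$ and its converse, which is where simple connectivity enters via Dwyer's convergence theorem for the Eilenberg--Moore spectral sequence (Section~\ref{secmodels}). Citing \cite{DwyerGreenleesIyengar} for this is acceptable, but it should be flagged as the point where the hypothesis on $X$ is used.
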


On the other hand, the subcategory $\aD^{c}_{DX}$ of compact objects
of $\aD_{DX}$ is the thick subcategory generated by $DX$ and is
equivalent under the adjunction above to the thick subcategory
$\thick{\splus\Omega X}$ of $\aD_{\splus \Omega X}$ generated by
$S$.  The category $\thick{\splus \Omega X}$ is a geometric
analogue of the category of finite rank projective $\pi_{1}X$-modules,
whose $K$-theory $G^{\bZ}_{i}(\bZ[\pi])$ was studied by
Swan~\cite{Swan}.  We call the 
$K$-theory of the category $\model{\splus\Omega X}$ the geometric
Swan theory of the space $X$ and denote it as $G(X)$.  We prove the
following theorem. 

\begin{thm}\label{maing}
$G(X)=K(\model{\splus \Omega X})$ is weakly equivalent to $K(DX)$.
\end{thm}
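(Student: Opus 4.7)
The plan is to promote the Koszul duality equivalence between the compact $DX$-modules $\aD^{c}_{DX}$ and the thick subcategory $\thick{\splus \Omega X}$ of $\aD_{\splus \Omega X}$ generated by $S$, already asserted in the paragraphs preceding Theorem~\ref{maina}, to an equivalence of Waldhausen $K$-theory spectra, exploiting the invariance of Waldhausen $K$-theory under passage to opposite categories. The argument is formally dual to the one for Theorem~\ref{maina}, with the roles of $\splus \Omega X$ and $DX$ exchanged.

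First I would rigidify the Koszul dualization functor on the point-set level. Working in one of the standard symmetric monoidal categories of spectra, fix a cofibrant-fibrant model $\tilde{S}$ of $S$ as an $\splus \Omega X$-module on which a strict $DX$-action realizes the weak equivalence $\End_{\splus \Omega X}(S) \htp DX$. Define
\[
\Phi(M) = F_{\splus \Omega X}(M, \tilde{S}),
\]
regarded as a $DX$-module via that action. Verify that $\Phi$ carries cofibrations to fibrations, weak equivalences to weak equivalences, and pushouts to pullbacks, so that it is an exact functor $\model{\splus \Omega X} \to \model[DX]{DX}^{\op}$, where $\model[DX]{DX}$ is the Waldhausen subcategory of $DX$-modules weakly equivalent to an object of $\aD^{c}_{DX}=\thick[DX]{DX}$; by definition $K(\model[DX]{DX}) \htp K(DX)$.

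Second I would show that $\Phi$ induces a contravariant equivalence on homotopy categories. Since $\Phi(S) \htp DX$, the functor $\Phi$ sends the generator of $\thick{\splus \Omega X}$ to the generator of $\aD^{c}_{DX}$, and a thick-subcategory argument combined with the Koszul duality equivalence cited above propagates this to all of $\thick{\splus \Omega X}$. Waldhausen's approximation theorem then promotes the equivalence on homotopy categories to a weak equivalence of Waldhausen $K$-theory, yielding $K(\model{\splus \Omega X}) \htp K(\model[DX]{DX}^{\op})$, and a final application of Waldhausen's duality theorem $K(\aC) \htp K(\aC^{\op})$ completes the chain
\[
G(X) = K(\model{\splus \Omega X}) \htp K(\model[DX]{DX}^{\op}) \htp K(\model[DX]{DX}) \htp K(DX).
\]

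The main obstacle will be the rigidification of Koszul duality to an honest functor of Waldhausen categories with the required exactness properties---in particular, ensuring that $\Phi$ sends cofibrations of cofibrant $\splus \Omega X$-modules to fibrations of fibrant $DX$-modules and strictly interchanges pushouts with pullbacks, rather than doing so only up to weak equivalence. This typically forces a careful choice of a specific point-set model of spectra together with compatible cofibrant and fibrant replacements. Moreover, the simple connectivity and finiteness of $X$ enter essentially here, because they underlie the double-Koszul-duality reflexivity responsible for $\Phi$ inducing an equivalence on the compact objects rather than a bare adjunction.
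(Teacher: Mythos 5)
Your outline is essentially sound, but it is a genuinely different route from the one the paper takes for this theorem. The paper does \emph{not} rigidify the duality functor into an exact functor of Waldhausen categories; indeed it explicitly notes that $F_{R}(-,\SPdual)$ and $F_{DX}(-,\SPdual)$ are not exact and do not land in the model Waldhausen categories. Instead, for each multi-index it forms the ``graph'' category of triples $(A,B,\phi)$ with $A\in\Sdotq\model{R}$, $B\in\Sdotq\aM^{c}_{DX}$, and $\phi$ a weak equivalence $A_{*}\to F_{DX}(B_{*},\SPdual)$ (the simplicial directions being reversed on the $DX$ side to absorb the contravariance); the two forgetful projections from the resulting symmetric spectrum $T$ to $G(X)$ and to $K(DX)$ are shown to be weak equivalences by Quillen's Theorem~A (Lemma~\ref{lemquila}) combined with the derived equivalences of diagram categories (Lemma~\ref{lemcompSdot}). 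The payoff of that extra machinery is that one never leaves the categories of cofibrant modules, so the smash-product pairings persist and the $E_{\infty}$ and module refinements of Theorem~\ref{mainc} come along for free. Your route---make $F_{R}(-,\tilde S)$ exact into $\aM^{\op}_{DX}$ with cofibrations opposite to fibrations, apply the approximation theorem, then use invariance of $K$-theory under opposites---is essentially the mechanism the paper itself deploys in Section~\ref{secinvolution} for the involution (where $F_{R}(-,R)$ and $F_{S}(-,S)$ are treated exactly this way), and it does prove the bare statement; the paper also sketches a second quick proof via the $\Spdot$ construction. Three caveats: the final step $K(\aC^{\op})\simeq K(\aC)$ is not a theorem of Waldhausen but the comparison of \cite[1.1]{BlumbergMandell}; the commuting strict $R$- and $DX$-actions you postulate on $\tilde S$ are precisely what the constructions of $SP$ and $\SPdual$ in Section~\ref{secmodels} exist to supply, so that step should be cited rather than waved at; and for this particular equivalence $\thick{R}\simeq\aD^{c}_{DX}$ the essential input is $\Ext_{R}(S,S)\simeq DX$---the double centralizer condition, and hence simple connectivity, is what Theorem~\ref{maina} needs, not Theorem~\ref{maing}.
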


In fact both $G(X)$ and $K(DX)$ are commutative
ring spectra, and the equivalence is a weak equivalence of ring
spectra.  Likewise $A(X)$ is a module spectrum over $G(X)$ and
$K(\model{DX})$ is a module spectrum over $K(DX)$; the
equivalence in Theorem~\ref{maina} is a weak equivalence of module
spectra.  In fact, we have the following more precise result.

\begin{thm}\label{mainc}
The weak equivalence $G(X)\to K(DX)$ is a map of $E_{\infty}$ ring
spectra.  The weak equivalence $K(\model{DX}) \to A(X)$ is a map of
$G(X)$-modules.
\end{thm}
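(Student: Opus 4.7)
The plan is to upgrade the Koszul duality functors realizing the equivalences in Theorems~\ref{maing} and~\ref{maina} to contravariant (lax) symmetric monoidal functors between suitably structured Waldhausen categories; the multiplicativity of the induced maps of $K$-theory spectra will then follow from the standard multicategorical $K$-theory machinery.

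First I would identify the multiplicative structures on each side. On the $DX$-side, the commutative $S$-algebra structure on $DX$ makes the categories of cofibrant modules representing $\aD^c_{DX}$ and $\thick{DX}$ into symmetric monoidal Waldhausen categories under $\sma_{DX}$, with $\thick{DX}$ a module category over $\aD^c_{DX}$; this is the origin of the $E_\infty$ ring structure on $K(DX)$ and the $K(DX)$-module structure on $K(\model{DX})$. On the $\splus\Omega X$-side, the diagonal $\Omega X\to\Omega X\times\Omega X$ makes $\splus\Omega X$ into a cocommutative Hopf $S$-algebra, so the category of $\splus\Omega X$-modules becomes symmetric monoidal under the smash product $(-)\sma_S(-)$ with diagonal action.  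Since $S\sma_S S\htp S$ as $\splus\Omega X$-modules with diagonal action, the subcategory $\model{\splus\Omega X}$ is closed under this product and inherits a symmetric monoidal Waldhausen structure giving $G(X)$ its $E_\infty$ ring structure; the compact $\splus\Omega X$-modules then form a module category over $\model{\splus\Omega X}$, equipping $A(X)$ with its $G(X)$-module structure.

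Next I would show that the Koszul duality intertwines these structures.  For cofibrant $M,N\in\model{\splus\Omega X}$, the commutative multiplication on $DX$ and the diagonal action on $M\sma_S N$ provide a natural transformation
\[
 \Hom_{\splus\Omega X}(M,S)\sma_{DX}\Hom_{\splus\Omega X}(N,S)
 \longrightarrow \Hom_{\splus\Omega X}(M\sma_S N, S).
\]
This map is the canonical isomorphism $DX\sma_{DX}DX\iso DX$ when $M=N=S$, and so a thick-subcategory (cellular) induction shows it is a weak equivalence throughout $\model{\splus\Omega X}$. The analogous construction on the other side provides the compatibility of the $\aD^c_{DX}$-module structure on $\thick{DX}$ with the $\model{\splus\Omega X}$-module structure on compact $\splus\Omega X$-modules.

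The main obstacle is assembling these compatibilities into the highly structured data needed to produce an $E_\infty$ ring map and a module map on $K$-theory, rather than mere maps of ring and module spectra up to homotopy. I would work in the framework of symmetric monoidal Waldhausen categories and their multiexact functors, apply Waldhausen's construction enriched with the smash product, and invoke the Elmendorf--Mandell multicategorical $K$-theory functor to extract $E_\infty$-algebra and module maps of spectra. The technical heart of the matter is rectifying the Koszul duality so that it is strictly compatible with the symmetric monoidal structures at the point-set level; concretely, I would pass through bar and cobar constructions where the multiplicative coherences can be checked on the nose, and then transport the resulting strictly multiplicative comparisons back along the Quillen equivalences underlying Theorems~\ref{maing} and~\ref{maina}.
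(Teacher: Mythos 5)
Your setup is right and matches the paper's: the monoidal structures ($\sma_{DX}$ on $DX$-modules, $\sma_{S}$ with diagonal action on $\splus\Omega X$-modules via the Hopf structure coming from the diagonal on the loop group) and the lax symmetric monoidal transformations on the duality functors induced by the multiplication $\SPdual\sma_{DX}\SPdual\to\SPdual$ are exactly the ingredients the paper uses. But the step you defer as ``the technical heart'' is where the real content lies, and your sketch of it would not go through as stated. The difficulty is not primarily one of rectifying monoidal coherences: the duality functor $F_{DX}(-,\SPdual)$ is (i) contravariant, so it is not a multiexact functor of Waldhausen categories in the Elmendorf--Mandell sense, and (ii) not exact --- it carries cofibrations to fibrations and cofibrant objects to fibrant, generally non-cofibrant objects, so it does not even define a functor $\aM^{c}_{DX}\to\model{R}$, let alone a strictly monoidal one. ``Passing through bar and cobar constructions and transporting back along the Quillen equivalences'' does not explain how a point-set functor inducing the $K$-theory map is to be produced at all, and transporting an $E_{\infty}$ structure across a mere zigzag of weak equivalences of spectra is precisely the up-to-homotopy argument the theorem is meant to improve upon.

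The paper's solution is to avoid rectifying the duality into an exact monoidal functor. Instead it forms, for each $q$ and $n_{1},\dotsc,n_{q}$, the category of triples $(A,B,\phi)$ with $A\in\Sdotq[n_{1},\dotsc,n_{q}]\model{R}$, $B\in\Sdotq[n_{1},\dotsc,n_{q}]\aM^{c}_{DX}$, and $\phi$ a levelwise weak equivalence $A_{*}\to F_{DX}(B_{*},\SPdual)$, with the simplicial indices reversed on the $B$-side to account for contravariance. The nerves assemble into a symmetric spectrum $T$ which inherits an $E_{\infty}$ multiplication from the lax monoidal structure maps; the two forgetful maps $T\to G(X)$ and $T\to K(DX)$ are strictly structure-preserving, and they are shown to be weak equivalences by Quillen's Theorem~A (Lemma~\ref{lemquila}) together with the identification of the relevant thick subcategories (Lemma~\ref{lemcompSdot}). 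The module statement is proved by the same device with a spectrum $U$ over $T$. To complete your proof you need this ``two-sided'' construction or an equally concrete substitute; the thick-subcategory induction you describe shows the lax structure maps are equivalences, which is needed, but it is not the step that produces the $E_{\infty}$ map.
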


Because $X$ is a finite CW complex, the
$\splus\Omega X$-module $S$ is compact, and so we can interpret the
map on $K$-theory induced by inclusion of the thick subcategory
generated by $S$ into the thick subcategory of compact $\splus \Omega
X$-modules in terms of Waldhausen's fibration theorem.  We
obtain a localization sequence of $K$-theory spectra
\begin{equation}\label{eqloc}
G(X)\to A(X)\to K(\aC_{\splus \Omega X}/\epsilon)
\end{equation}
where $\aC_{\splus \Omega X}/\epsilon$ is the Waldhausen category of
compact $\splus\Omega X$-modules but with weak equivalences the maps
whose cofiber is in $\thick{\splus\Omega X}$.  (We typically do not
have a corresponding transfer $A(X)\to G(X)$ because $S$ is
not usually a compact $DX$-module when $X$ is a finite complex.)  We
intend to study this sequence further in a future paper.
%
%

Derived Koszul duality between categories of modules over differential
graded algebras is a contravariant phenomenon, but there is also an
associated covariant Morita adjunction switching chirality from left
modules to right modules \cite{DwyerGreenleesIyengar}.  (For a survey
on Morita theory in stable homotopy theory, see
\cite{Shipley-Morita}.) In the 
presence of an anti-involution (for example, commutativity), we can
use the anti-involution to obtain a Morita adjunction between
categories of left modules.  In the context of $DX$ and $\splus \Omega
X$, we get two covariant adjunctions
\[
\xymatrix{%
\aD_{\splus \Omega X}\ar[r]<.75ex>\fixdepth
&\aD_{DX}\ar[l]<.75ex>\fixdepth
}%
\]
given by the adjoint pairs
\[
\Ext_{\splus \Omega X}(S,-),\Tor^{DX}(-,S)
\qquad \text{and}\qquad  \Tor^{\splus \Omega X}(-,S), \Ext_{DX}(S,-).
\]
The first restricts to an equivalence
\[
\thick{\splus \Omega X}\simeq \aD^{c}_{DX}
\]
(in fact, since $S$ is compact as an $\splus \Omega X$ module, the
adjunction restricts to embed $\aD_{DX}$ as the localizing
subcategory of $S$ in $\aD_{\splus \Omega X}$).
The second restricts to an equivalence
\[
\aD^{c}_{\splus \Omega X}\simeq \thick{DX}.
\]
These equivalences give rise to equivalences on algebraic $K$-theory,
akin to the equivalences of Theorems~\ref{maina} and ~\ref{maing}.
The composites are self-maps on $A(X)$ and $G(X)$.  In
Section~\ref{secinvolution}, we identify these self-maps as the
standard homotopy involutions.

Experts will recognize that Theorems~\ref{maina} and~\ref{maing} fit
into the framework of \cite{DwyerGreenleesIyengar}
and \cite{ToenVezzosi}; the benefit of the approach here is the
description in terms of concrete models, which allow more direct
comparisons than in the abstract approach, and more precise results
such as Theorem~\ref{mainc}.

Readers may also wonder about the connection to the work of Goresky,
Kottwitz, and MacPherson on Koszul duality \cite{GKM}.  From our
perspective, they study the ``dual'' setting in which $G = \Omega BG$
is compact and $BG$ is infinite.  Our techniques apply to recover
(integral) liftings of their equivalences of derived categories; in
fact, this case was studied in \cite{HMS}.  Because this example is
not connected as closely to $A$-theory, we have chosen to omit a
detailed discussion. 

The paper is organized as follows.  In Section~\ref{secalg} we review
and slightly extend the passage from algebraic structures on
Waldhausen categories to algebraic structures on $K$-theory spectra,
using the technology developed in \cite{ElmendorfMandell}.  In
Section~\ref{secmodels} we introduce the concrete models for $\splus
\Omega X$ and the endomorphism ring spectra, which allow a good
point-set model for the adjunctions in the remaining
sections. Section~\ref{seccontra} studies the contravariant adjunction
and proves
Theorems~\ref{maina}, \ref{maing}, and~\ref{mainc}.  Finally,
Section~\ref{secinvolution} studies the point-set model of the
covariant adjunctions of \cite{DwyerGreenleesIyengar} and identifies
the composite homotopy endomorphisms on $A(X)$ and $G(X)$ as the
standard homotopy involutions.

The authors would like to thank Ralph Cohen and Bruce Williams for
asking motivating questions, as well as Haynes Miller, John Klein,
Jack Morava, and John Rognes for helpful conversations.

\section{Algebraic structures on {W}aldhausen $K$-theory}\label{secalg}

Since even before the advent of the theory of symmetric spectra
\cite{HSS}, experts have understood that any algebraic structure on a
Waldhausen category induces an analogous structure on Waldhausen
$K$-theory.  Sources for results of this type in the literature
include \cite[p.~342]{Wald}, \cite[App.~A]{GeisserHesselholt},
\cite{ElmendorfMandell}. We briefly review the current state of the
theory here. 

We refer the reader to \cite[\S1.2]{Wald} for the definition of a
Waldhausen category (called there a ``category with cofibrations and
weak equivalences'').  Recall that Waldhausen's $\Sdot$
construction \cite[\S1.3]{Wald} produces a simplicial Waldhausen
category $\Sdot\aC$ from a Waldhausen category $\aC$ and is defined as
follows.  Let $\Ar[n]$ denote the category with objects $(i,j)$ for
$0\leq i\leq j\leq n$ and a unique map $(i,j)\to (i',j')$ for $i\leq
i'$ and $j\leq j'$.  $\Sdot[n]\aC$ is defined to be the full
subcategory of the category of functors $A\colon \Ar[n]\to \aC$ such
that:
\begin{itemize}
\item $A_{i,i}=*$ for all $i$, 
\item The map $A_{i,j}\to A_{i,k}$ is a cofibration for all $i \leq
j \leq k$, and
\item The diagram
\[  \xymatrix@-1pc{%
A_{i,j}\ar[r]\ar[d]&A_{i,k}\ar[d]\\A_{j,j}\ar[r]&A_{j,k} } \] is a
pushout square for all $i \leq j \leq k$,
\end{itemize}
where we write $A_{i,j}$ for $A(i,j)$.  The last two conditions can be
simplified to the hypothesis that each map $A_{0,j}\to A_{0,j+1}$ is a
cofibration and the induced maps $A_{0,j}/A_{0,i}\to A_{i,j}$ are
isomorphisms.  This becomes a Waldhausen category by defining a map
$A\to B$ to be a weak equivalence when each $A_{i,j}\to B_{i,j}$ is a
weak equivalence in $\aC$, and to be a cofibration when each
$A_{i,j}\to B_{i,j}$ and each induced map
$A_{i,k}\cup_{A_{i,j}}B_{i,j}\to B_{i,k}$ is a cofibration in $\aC$.

Since $\Sdot\aC$ forms a simplicial Waldhausen category, the
construction can be iterated to form $\Sdot \Sdot \ldots \Sdot \aC$.
For our purposes, it is convenient to have an ``all at once''
construction of the $q$-th iterate $\Sdotq\aC$.  For this
construction, we need the following terminology 
(see also \cite[\S~2]{Rognes}).

\begin{defn}\label{defcofcube}
Let $[n]$ denote the ordered set $0\leq 1 \leq \dotsb \leq n$.  For a
Waldhausen category $\aC$, a functor $C\colon [n_{1}]\times \dotsb
\times [n_{q}]\to \aC$ is \term{cubically cofibrant} means that:
\begin{enumerate}
\item Every map $C(i_{1},\dotsc,i_{q})\to C(j_{1},\dotsc,j_{q})$ is cofibration,
\item in every sub-square ($1\leq r<s\leq q$)
\[
\xymatrix{%
C(i_{1},\dotsc,i_{q})\ar[r]\ar[d]&C(i_{1},\dotsc,i_{r}+1,\dotsc,i_{q})\ar[d]\\
C(i_{1},\dotsc,i_{s}+1,\dotsc,i_{q})\ar[r]&
C(i_{1},\dotsc,i_{r}+1,\dotsc,i_{s}+1,\dotsc,i_{q}),
}
\]
the induced map from the pushout to the lower-right entry
\begin{multline*}
C(i_{1},\dotsc,i_{r}+1,\dotsc,i_{q})
\cup_{C(i_{1},\dotsc,i_{q})}
C(i_{1},\dotsc,i_{s}+1,\dotsc,i_{q})\to\\
C(i_{1},\dotsc,i_{r}+1,\dotsc,i_{s}+1,\dotsc,i_{q})
\end{multline*}
is a cofibration,
\item and in general, in every $m$-dimensional sub-cube specified by
choosing $m$ distinct coordinates $1 \leq r_1 < r_2 < \ldots <
r_m \leq n$, the induced map from the colimit over the diagram
obtained by deleting $Q = C(i_1,\ldots,i_{r_1}+1,\ldots,i_{r_2}+1,\ldots,
i_{r_m}+1, \ldots, i_n)$ to $Q$ is a cofibration.
\end{enumerate}
\end{defn}

\begin{cons}[Iterated $\Sdot$ Construction]\label{consitsdot}
Let $\Ar[n_{1},\dotsc,n_{q}]$ denote the category 
$\Ar[n_{1}]\times \dotsb \times \Ar[n_{q}]$.  For a functor
\[
A\colon \Ar[n_{1},\dotsc,n_{q}]=\Ar[n_{1}] \times \dotsb \times
\Ar[n_{q}]\to \aC, 
\]
we write $A_{i_{1},j_{1};\dotsc;i_{q},j_{q}}$ for the value of $A$ on
the object 
$((i_{1},j_{1}),\dotsc,(i_{q},j_{q}))$. 
For a Waldhausen category $\aC$, let
$\Sdotq[n_{1},\dotsc,n_{q}]\aC$ be the full subcategory of functors $A$
(as above) such that: 
\begin{itemize}
\item $A_{i_{1},j_{1};\dotsc;i_{q},j_{q}}=*$ whenever $i_{k}=j_{k}$ for some $k$.
\item The subfunctor 
\[
C(j_{1},\dotsc,j_{q})=A_{0,j_{1};\dotsb;0,j_{q}}\colon  [n_{1}]\times
\dotsb \times [n_{q}]\to \aC 
\]
is cubically cofibrant.
\item For every object 
$(i_{1},j_{1};\dotsc;i_{q},j_{q})$ in $\Ar[n_{1}]\times \dotsb \times
\Ar[n_{q}]$, every $1\leq r\leq q$, and every $j_{r}\leq k\leq n_{r}$, the square
\[  \xymatrix@-1pc{%
A_{i_{1},j_{1};\dotsc;i_{q},j_{q}}\ar[r]\ar[d]
&A_{i_{1},j_{1};\dotsc;i_{r},k;\dotsc;i_{q},j_{q}}\ar[d]\\
A_{i_{1},j_{1};\dotsc;j_{r},j_{r};\dotsc;i_{q},i_{q}}\ar[r]
&A_{i_{1},j_{1};\dotsc;j_{r},k;\dotsc;i_{q},i_{q}}
} \]
is a pushout square.
\end{itemize}
The subcategory $\w\Sdotq[n_{1},\dotsc,n_{q}] \aC$ consists of the maps in
$\Sdot[n_{1},\dotsc,n_{q}] \aC$ that are objectwise weak equivalences.
We understand $\Sdotmac{0}{} \aC$ to be $\aC$ and we see that $\Sdotmac{1}{n} \aC$
is $\Sdot[n] \aC$.
\end{cons}

Following Waldhausen~\cite[p.~330]{Wald}, we define the $K$-theory
spectrum of a Waldhausen category $\aC$ to be the spectrum with $q$-th
space
\[
K\aC(q)=N(\w \Sdotq \aC)=|N\subdot (\w\Sdotq \aC)|,
\]
the geometric realization of the nerve of the
multi-simplicial category $\w\Sdotq \aC$. The 
suspension maps $\Sigma K\aC(q)\to K(q+1)$ are induced on diagrams by
the projection map
\[
\Ar[n_{1}]\times \dotsb \times \Ar[n_{q}]\times \Ar[n_{q+1}]\to
\Ar[n_{1}]\times \dotsb \times \Ar[n_{q}].
\]
Defining an action of $\Sigma_{q}$ on $K\aC(q)$ by permuting the
simplicial directions, we see from the explicit description of
$\Sdotq \aC$ above, that $K\aC$ forms a symmetric
spectrum.

We can encode an algebraic structure on a set of symmetric spectra
using a \term{symmetric multicategory} (also called \term{colored
operad}).  A symmetric multicategory $\aM$ enriched in (small) categories
consists of: 
\begin{itemize}
\item A set of \term{objects} $\Ob\aM$.
\item A (small) category of \term{$k$-morphisms}
$\aM_{k}(x_{1},\dotsc,x_{k};y)$ for all $k=0,1,2,\dotsc$ and all
$x_{1},\dotsc,x_{k},y\in \Ob\aM$.
\item A unit object $1_{x}$ in $\aM_{1}(x;x)$ for each $x\in \Ob\aM$
\item For every permutation $\sigma \in \Sigma_{k}$, an isomorphism
\[
\sigma^{*}\colon \aM_{k}(x_{1},\dotsc,x_{k};y)\to 
\aM_{k}(x_{\sigma 1},\dotsc ,x_{\sigma k}),
\]
compatibly assembling to
an action of $\Sigma_{k}$ on $\coprod \aM_{k}(x_{1},\dotsc,x_{k};y)$.
\item Composition maps
\begin{multline*}
\aM_{n}(y_{1},\dotsc,y_{n};z)\times 
(\aM_{j_{1}}(x_{1,1},\dotsc,x_{1,j_{1}};y_{1})
\times \dotsb\times 
\aM_{j_{n}}(x_{n,1},\dotsc,x_{n,j_{n}};y_{n}))\\
\to \aM_{j}(x_{1,1},\dotsc,x_{n,j_{n}};z)
\end{multline*}
\end{itemize} 
satisfying the analogue of the usual conditions for an
operad~\cite[pp.1--2]{MayGILS}; these are written out in
\cite[\S2]{ElmendorfMandell}. The following definition is standard: 

\begin{defn}
Let $\aM$ be a symmetric multicategory enriched in small categories. An
$\aM$-algebra $A$ in symmetric 
spectra consists of a symmetric spectrum $A(x)$ for each $x\in \Ob\aM$
and maps of symmetric spectra
\[
N(\aM_{k}(x_{1},\dotsc,x_{k};y))\sma A(x_{1})\sma\dotsb \sma
A(x_{k})\to A(y)
\]
for all $k$, $x_{1},\dotsc,x_{k},y$, which are compatible with the
composition maps and identity objects of $\aM$.  Here (as above) $N(-)$ denotes
the geometric realization of the nerve of the category.  When $k=0$,
we understand the map pictured above as $N(\aM(;y))\sma S\to A(y)$. 
\end{defn}

To define an $\aM$-algebra in Waldhausen categories, we first need to
describe the kinds of functors objects of $\aM_{k}$ should map to.

\begin{defn}
Let $\aC_{1},\dotsc,\aC_{n}$ and $\aD$ be Waldhausen categories.  A
functor 
\[
F\colon \aC_{1}\times \dotsb \times \aC_{n}\to \aD
\]
is \term{multiexact} if it satisfies the following conditions:
\begin{itemize}
\item $F(X_{1},\dotsc,X_{n})=*$ if any of $X_{1},\dotsc,X_{n}$ is $*$.
\item $F$ is exact in each variable
(preserves weak equivalences, cofibrations, and pushouts over
cofibrations in each variable, keeping the other variables fixed).
\item Given cofibrations $X_{k,0}\to X_{k,1}$ in $\aC_{k}$ for all
$k$, the diagram 
\[
A(i_{1},\dotsc,i_{n})=F(X_{1,i_{1}},\dotsc,X_{n,i_{n}})\colon
[1]\times \dotsb \times [1]\to \aD
\]
is cubically cofibrant.  
\end{itemize}
We define the category of multiexact functors
\[
\Mult_{n}(\aC_{1},\dotsc,\aC_{n};\aD)
\]
to have objects the multiexact
functors and maps the natural weak equivalences.  For $n=0$, we define
$\Mult_{0}(;D)$ to be $\w\aD$, the subcategory of weak equivalences in
$\aD$. 
\end{defn}

Because multiexact functors compose into multiexact functors, the
definition above makes the category of small Waldhausen 
categories into a symmetric multicategory enriched in categories.  Following
\cite{ElmendorfMandell}, we define an $\aM$-algebra in Waldhausen
categories as a map of symmetric multicategories enriched in categories.

\begin{defn}
Let $\aM$ be a symmetric multicategory enriched in small
categories. An $\aM$-algebra $C$ in 
Waldhausen categories consists of a Waldhausen category $C(x)$ for
each $x\in \Ob\aM$ and functors
\[
\aM_{k}(x_{1},\dotsc,x_{k};y)\to \Mult_{k}(\aC(x_{1}),\dotsc,\aC(x_{k}); \aD)
\]
for all $k$, $x_{1},\dotsc,x_{k},y$, which are compatible with the
permutations, composition maps, and identity objects of $\aM$.
\end{defn}

Recalling the universal property of the smash product of symmetric
spectra \cite[2.1.4]{HSS}, the following theorem is immediate from
inspection of the definitions above.

\begin{thm}\label{thm:multstruct}
Waldhausen's algebraic $K$-theory functor naturally takes
$\aM$-al\-ge\-bras in Waldhausen categories to $\aM$-algebras in
symmetric spectra. 
\end{thm}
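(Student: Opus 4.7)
The plan is to adapt the approach of \cite{ElmendorfMandell} to the all-at-once iterated $\Sdot$ construction of Construction~\ref{consitsdot}.  The key point is that the combinatorics of the iterated $\Sdot$ construction is set up precisely so that a multiexact functor applied to a product of cubically cofibrant diagrams yields a cubically cofibrant diagram in the combined cube; the rest of the argument is formal assembly.

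First I would show that a multiexact functor $F\colon \aC_{1}\times\dotsb\times\aC_{k}\to \aD$ determines, for each tuple $(q_{1},\dotsc,q_{k})$ of nonnegative integers, a functor
\[
\Sdotmac{q_{1}}{\seqdot}\aC_{1}\times\dotsb\times\Sdotmac{q_{k}}{\seqdot}\aC_{k}
\to \Sdotmac{q_{1}+\dotsb+q_{k}}{\seqdot}\aD
\]
obtained by applying $F$ pointwise to the external tensor product of the input diagrams.  The crucial verification is that the resulting $\aD$-valued diagram satisfies the cubical cofibrancy condition of Definition~\ref{defcofcube} in the combined $(q_{1}+\dotsb+q_{k})$-fold direction; this is exactly what the third clause of multiexactness guarantees.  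The vanishing axiom gives the pointwise basepoint condition, and the pushout squares in each $\Ar$-direction come from univariate exactness.  Passing to nerves of the subcategories of weak equivalences and using the basepoint axiom to pass from Cartesian to smash products, this yields based maps
\[
N(\w\Sdotmac{q_{1}}{\seqdot}\aC_{1})\sma\dotsb\sma N(\w\Sdotmac{q_{k}}{\seqdot}\aC_{k})
\to N(\w\Sdotmac{q_{1}+\dotsb+q_{k}}{\seqdot}\aD).
\]

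Next, for an $\aM$-algebra $C$, combining this construction with the structural functor $\aM_{k}(x_{1},\dotsc,x_{k};y)\to \Mult_{k}(C(x_{1}),\dotsc,C(x_{k});C(y))$ and taking nerves of the source produces natural maps
\[
N(\aM_{k}(x_{1},\dotsc,x_{k};y))\sma N(\w\Sdotmac{q_{1}}{\seqdot}C(x_{1}))\sma\dotsb\sma N(\w\Sdotmac{q_{k}}{\seqdot}C(x_{k}))
\to N(\w\Sdotmac{q_{1}+\dotsb+q_{k}}{\seqdot}C(y)).
\]
By the universal property of the smash product of symmetric spectra \cite[2.1.4]{HSS}, and using that Construction~\ref{consitsdot} is symmetric in its $q$ indices so that the required $\Sigma_{q_{1}+\dotsb+q_{k}}$-equivariance restricts correctly to the $\Sigma_{q_{1}}\times\dotsb\times\Sigma_{q_{k}}$-equivariance that the left side carries, these data assemble to the desired map of symmetric spectra $N(\aM_{k}(x_{1},\dotsc,x_{k};y))\sma KC(x_{1})\sma\dotsb\sma KC(x_{k})\to KC(y)$.

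Finally I would verify the axioms of an $\aM$-algebra in symmetric spectra, namely compatibility with unit objects, with composition in $\aM$, and with the $\Sigma_{k}$-action permuting the inputs.  Each of these reduces to the corresponding strict compatibility on the Waldhausen level, which holds by hypothesis for the $\aM$-algebra $C$, combined with the strict functoriality of the assembly step in $F$ and in the Waldhausen categories.  The main obstacle is the bookkeeping in the first step: the cubical cofibrancy condition involves colimits over arbitrarily large sub-cubes, and one must check that these are preserved when a multiexact $F$ is applied to a product of cubically cofibrant diagrams.  This reduces, via a pushout-product style argument on the coordinate directions coming from different variables of $F$, to the univariate preservation of cofibrations and pushouts along cofibrations in each variable, packaged together with the cubical cofibrancy clause of multiexactness applied one variable at a time.
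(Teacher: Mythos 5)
Your proposal is correct and follows exactly the route the paper intends: the paper's own proof consists of the single remark that the result is ``immediate from inspection of the definitions'' together with the universal property of the smash product \cite[2.1.4]{HSS}, and your argument---applying a multiexact functor pointwise to external products of iterated $\Sdot$ diagrams, checking cubical cofibrancy via the third clause of multiexactness, passing to nerves and smash products, and verifying the $\aM$-algebra axioms by strict functoriality---is precisely the inspection being elided. The only substantive verification, that cubical cofibrancy of the combined diagram follows from the $[1]^{\times k}$ case of multiexactness plus exactness in each variable, is the bookkeeping you correctly flag and sketch.
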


In particular, as explained in \cite[\S9]{ElmendorfMandell}, the
preceding theorem applies to describe the algebraic structures on
$K$-theory spectra induced by pairings on the level of Waldhausen
categories.  Suppose that $\aC$ is Waldhausen category which is
also a permutative category, where the product $\otimes \colon \aC
\times \aC \to \aC$ is a biexact functor; we will refer to $\aC$ as a
\term{permutative Waldhausen category}.  Recall 
that a permutative category is a rigidified form of a symmetric
monoidal category: a permutative category is a symmetric monoidal
category where the product satisfies strict associativity and unit
relations (the associativity and unit isomorphisms are the
identity). If
$\aC$ is a permutative Waldhausen category, a \term{strict Waldhausen module}
over $\aC$ consists of a Waldhausen category $\aQ$ and a biexact
functor $\aC \times \aQ \to \aQ$ satisfying the evident strict
associativity and unit relations.

The structure of a permutative Waldhausen category on $\aC$ is
equivalent to an algebra in Waldhausen categories for the
symmetric multicategory $E\Sigma_*$ \cite[\S3]{ElmendorfMandell}, where the
unique object of $E\Sigma_*$ is taken to $\aC$.  Then $K\aC$ becomes
an $E\Sigma_{*}$-algebra in symmetric spectra; this is a particular
type of $E_{\infty}$-algebra symmetric spectrum, which is an
associative ring symmetric spectra by neglect of structure
(the symmetric multicategory of objects of $E\Sigma_{*}$ is the operad $\Sigma_{*}$ of sets).
Similarly, the
structure of a strict Waldhausen module over $\aC$ on $\aQ$ is
equivalent to specifying an algebra in Waldhausen categories for the
symmetric multicategory associated to $E \Sigma_*$ parametrizing modules, called
$E\ell\mathbf{M}^{\Sigma_{*}}$ in 
\cite[\S9.1]{ElmendorfMandell}, such that the ``ring object'' is taken
to $\aC$ and the ``module object'' to $\aQ$.  Then $K\aQ$ becomes a
$K\aC$-module in symmetric spectra.

\begin{cor}\label{cor:mods}
Let $\aC$ be a permutative Waldhausen category.  Then $K\aC$ is
naturally an $E \Sigma_*$-algebra symmetric spectrum, and in
particular an associative ring symmetric spectrum. Moreover, if
$\aD$ is a strict Waldhausen $\aC$-module, then $K\aD$ is
naturally a $K\aC$-module.
\end{cor}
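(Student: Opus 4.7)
The plan is to deduce both statements as direct applications of Theorem~\ref{thm:multstruct}, using the identifications of permutative and module structures with algebras over specific multicategories that are already in place in the surrounding text.

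First, I would invoke the identification (proved in \cite[\S3]{ElmendorfMandell}) that a permutative Waldhausen category structure on $\aC$ is the same data as an $E\Sigma_{*}$-algebra in Waldhausen categories sending the unique object of $E\Sigma_{*}$ to $\aC$. Then Theorem~\ref{thm:multstruct}, applied to $\aM = E\Sigma_{*}$, produces an $E\Sigma_{*}$-algebra in symmetric spectra whose underlying symmetric spectrum is $K\aC$. This is the first assertion. The statement about associative ring symmetric spectra then follows by neglect of structure: the category-of-objects of $E\Sigma_{*}$ is the operad $\Sigma_{*}$ (the symmetric groups), and since any $\Sigma_{*}$-algebra in symmetric spectra is in particular an associative monoid for the smash product, we obtain the underlying ring structure on $K\aC$.

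Second, I would apply the parallel identification for modules, namely that a strict Waldhausen $\aC$-module structure on $\aD$ corresponds exactly to an algebra in Waldhausen categories over the symmetric multicategory $E\ell\mathbf{M}^{\Sigma_{*}}$ from \cite[\S9.1]{ElmendorfMandell} that sends the distinguished ``ring object'' to $\aC$ and the ``module object'' to $\aD$. A second application of Theorem~\ref{thm:multstruct}, this time with $\aM = E\ell\mathbf{M}^{\Sigma_{*}}$, then yields an $E\ell\mathbf{M}^{\Sigma_{*}}$-algebra in symmetric spectra whose ring-object component is the $E\Sigma_{*}$-algebra $K\aC$ already produced, and whose module-object component is $K\aD$. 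Finally, I would note that the multicategory $E\ell\mathbf{M}^{\Sigma_{*}}$ is constructed precisely so that restricting an algebra over it to the module object, while remembering the action of the ring object, recovers the notion of a module spectrum over the associated ring spectrum; this gives the $K\aC$-module structure on $K\aD$.

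There is no real obstacle beyond definition-chasing: the substantive technical content has already been absorbed into Theorem~\ref{thm:multstruct} and the multicategorical descriptions of permutative categories and their modules in \cite{ElmendorfMandell}. The only thing to verify in the argument is the compatibility between the two applications of Theorem~\ref{thm:multstruct} — that is, that the $K\aC$ appearing as the ring component in the second application agrees with the $E\Sigma_{*}$-algebra $K\aC$ from the first. This is automatic because $E\ell\mathbf{M}^{\Sigma_{*}}$ contains $E\Sigma_{*}$ as the full sub-multicategory on the ring object, and $K$-theory is a functor of multicategory maps, so restriction commutes with applying $K$.
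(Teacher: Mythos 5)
Your proposal is correct and follows the same route the paper takes: identifying a permutative Waldhausen category with an $E\Sigma_{*}$-algebra in Waldhausen categories (and a strict module with an $E\ell\mathbf{M}^{\Sigma_{*}}$-algebra) and then applying Theorem~\ref{thm:multstruct}, with the ring structure obtained by neglect of structure. The compatibility check you flag at the end is a sensible addition but, as you note, is automatic from functoriality in the multicategory variable.
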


Working with a permutative product has the appealing consequence that
the multicategory that arises is a familiar one, namely, the categorical
Barratt-Eccles operad $E \Sigma_*$.  However, the categories that we
work with in this paper (and that tend to arise in practice) are
symmetric monoidal categories rather than permutative categories.
This is no real limitation, since a standard construction
\cite{Isbell} rectifies any symmetric 
monoidal category into an equivalent
permutative category: The rectification of $\aC$ is
a category $\aC'$ with objects the ``words''
in the objects of $\aC$, where a word $(X_{1},X_{2},\dotsc,X_{r})$ 
corresponds to the product 
\[
\lambda(X_{1},\dotsc,X_{r})=(\dotsb (X_{1}\otimes X_{2})\otimes \dotsb)\otimes X_{r}
\]
in 
$\aC$; we associate the empty word
in $\aC'$ to the unit of the monoidal product.  
The morphisms in $\aC'$ are precisely the 
morphisms in $\aC$ between the associated products
\[
\aC'((X_{1},\dotsc,X_{r}),(Y_{1},\dotsc,Y_{s}))=
\aC(\lambda (X_{1},\dotsc,X_{r}),\lambda (Y_{1},\dotsc,Y_{s}))
\]
Concatenation provides the permutative structure.  Sending a word to
the associated product $\lambda$ defines a strong symmetric monoidal
functor $\aC'\to \aC$. The inclusion of $\aC$ in $\aC'$ as the
singleton words is also a strong symmetric monoidal functor; the
composite functor $\aC\to \aC$ is the identity, while the composite
functor $\aC' \to \aC'$ is naturally isomorphic to the identity via
the map corresponding to the identity map on the associated product.
When $\aC$ is a Waldhausen category and $\otimes$ is biexact, we use
the variant where we look at words in objects that are not $*$
together with a distinguished zero object $*$, and force a
concatenation in $\aC'$ with $*$ to result in $*$.  The resulting
category $\aC''$ becomes a Waldhausen category when we define the
cofibrations and weak equivalences to be those maps that correspond to
weak equivalences and cofibrations in $\aC$.  The functors above
remain strong symmetric monoidal equivalences, but now are exact
functors as well.

Alternatively, at the cost of complicating the multicategory in
Corollary~\ref{cor:mods}, we can work directly with symmetric monoidal
Waldhausen categories (i.e., Waldhausen categories that are symmetric
monoidal under a biexact product).  Specifying such a structure on
$\aC$ is equivalent to specifying the structure of an algebra over a
certain symmetric multicategory $\aB$ enriched in small categories,
defined as 
follows: $\Ob \aB$ is a single element.  For $k=1$, $\aB_{1}$ is the
category with one object and the identity morphism.  For $k>1$, $\aB$
is the category with objects the labelled planar binary trees with $k$
leaves, having a unique morphism between any two objects.  The
permutation action permutes the labels.  As
above, there is a symmetric multicategory parametrizing modules in
this setting; an action of $\aC$ on a Waldhausen category $\aD$
through a biexact functor endows $(\aC,\aD)$ with the structure of an
algebra over this module multicategory.  We have the following consequence:

\begin{cor}\label{cor:symmods}
Let $\aC$ be a symmetric monoidal Waldhausen category.  Then $K\aC$ is
naturally an $\aB$-algebra symmetric spectrum. Moreover, if
$\aD$ is a symmetric monoidal Waldhausen $\aC$-module, then $K\aD$ is
naturally a $K\aC$-module (parametrized by the multicategory of
$E_\infty$-modules associated to $\aB$).  
\end{cor}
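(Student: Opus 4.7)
The plan is to reduce the statement to Theorem~\ref{thm:multstruct} after identifying $\aB$-algebras in Waldhausen categories with symmetric monoidal Waldhausen categories, and algebras over the associated module multicategory with pairs $(\aC,\aD)$ as in the hypotheses.

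First I would unpack what an $\aB$-algebra in Waldhausen categories amounts to. The single object of $\aB$ picks out a Waldhausen category $\aC$. For $k\geq 2$, a labelled planar binary tree $T\in\aB_{k}$ is sent to a multiexact functor $\aC^{k}\to \aC$; operationally, $T$ records a parenthesization of $k$ inputs together with a leaf-labelling by $\{1,\dotsc,k\}$. Operadic composition corresponds to grafting trees at leaves, the $\Sigma_{k}$-action permutes labels, and the $k=1$ case picks out the identity functor. The unique tree in $\aB_{2}$ with standard labels supplies a biexact tensor product $\otimes\colon \aC\times \aC\to \aC$; the unit comes from the $k=0$ part of the multicategory (a distinguished object of $\aC$, which must be $*$ compatible with the multiexactness conditions on other slots, giving a monoidal unit up to canonical isomorphism). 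Because $\aB_{k}$ has a unique morphism between any two $k$-ary objects, every pair of iterated products on the same $k$ inputs is connected by a canonical natural isomorphism: the non-identity automorphism in $\aB_{2}$ yields the symmetry, the two objects of $\aB_{3}$ together with their unique isomorphism yield the associator, and all higher coherence constraints (pentagon, hexagon, etc.) commute tautologically because there is only one morphism available to compare any two composites. Conversely, a symmetric monoidal Waldhausen category structure on $\aC$ assembles into such a functor from $\aB$ by Mac Lane's coherence theorem. The analogous unpacking for the module multicategory associated to $\aB$ identifies its algebras in Waldhausen categories with the pairs $(\aC,\aD)$ where $\aC$ is a symmetric monoidal Waldhausen category and $\aD$ carries a biexact symmetric monoidal action.

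With these identifications in hand, Theorem~\ref{thm:multstruct} applied to $\aM=\aB$ immediately produces the $\aB$-algebra structure on $K\aC$, and applied to the module multicategory produces the structure on $K\aD$ of a $K\aC$-module parametrized by the module multicategory associated to $\aB$, which is by definition the multicategory of $E_{\infty}$-modules in the statement. The main obstacle is therefore only the bookkeeping in the second paragraph: matching the combinatorial data of labelled planar binary trees and their unique morphisms against the axioms of a symmetric monoidal Waldhausen category. This is essentially coherence theory in multicategorical dress, and uniqueness of morphisms in $\aB_{k}$ makes the verification automatic once the dictionary between trees and iterated tensor products is set up, so no substantive new calculation is required.
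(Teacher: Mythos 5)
Your proposal is correct and matches the paper's (essentially implicit) argument: the paper treats the corollary as an immediate consequence of the preceding identification of symmetric monoidal Waldhausen category structures with $\aB$-algebra structures (and likewise for modules), followed by an application of Theorem~\ref{thm:multstruct}. Your second paragraph simply spells out the coherence bookkeeping that the paper leaves to the reader.
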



\section{Models for endomorphism $S$-algebras\\
and the double centralizer condition} 
\label{secmodels}

Classically, for a $k$-algebra $R$ and a $R$-module $M$, the double
centralizer condition for $M$ is the requirement that the natural map
\[
R\to \End_{\End_{R}(M,M)}(M,M)
\]
be an isomorphism.  Dwyer, Greenlees, and Iyengar \cite{DwyerGreenleesIyengar} 
studied the derived form of this condition.  They study the example of
$R=\splus \Omega X$ and $DX\simeq \End_{R}(S,S)$ in
\cite[\S4.22]{DwyerGreenleesIyengar}.  We review this example in this section in terms
of specific models we use in the remainder of the paper.

In our context, we are interested in the case when $X$ is a finite CW
complex.  As we will see below, Dwyer's results on convergence of the
Eilenberg-Moore spectral sequence \cite{Dwyer-EM} imply that the
double centralizer map cannot be a weak equivalence unless $X$ is
simply connected (as this is the only case in which $\pi_{1}X$ acts
nilpotently on $H_{0}(\Omega X)$).  Once we restrict to this context,
we can assume without loss of generality that $X$ is the geometric
realization of a reduced finite simplicial set.  Then we have a
topological group model $G$ for $\Omega X$ (given by the geometric
realization of the Kan loop group), and a free $G$-CW complex $P$
whose quotient by $G$ is $X$ (the twisted cartesian product
$G\subdot\times_{\tau}X\subdot$ for the universal twisting function
$\tau$; see for example Chapter VI of~\cite{May-Simplicial}).

\begin{notn}
Let $X$, $P$, and $G$ be as above.  Let $R=\splus G$, regarded as an
EKMM $S$-algebra \cite[IV.7.8]{EKMM}.  Let $SP=\splus P$, and let
$E=F_{R}(SP,SP)$.
\end{notn}

We regard $S$ as a $R$-algebra via the augmentation $R\to S$ (induced
by the map $G\to *$).  The map $SP\to S$ (induced by the map $P\to *$)
is a weak equivalence of $R$-modules.  Although $SP$ is not cofibrant,
it is semi-cofibrant \cite[1.2]{LewisMandell2}, meaning that the functor
$SP\sma_{S}(-)=P_{+}\sma(-)$ from $S$-modules to $R$-modules preserves
cofibrations and acyclic cofibrations \cite[1.3(a)]{LewisMandell2}.
Since in EKMM $S$-module categories all 
objects are fibrant, $E$ represents the correct endomorphism algebra
$\Ext_{R}(S,S)$ \cite[6.3]{LewisMandell2}.

These particular models show the strong parallel between the double
centralizer condition for $\splus \Omega X$ and the bar duality theory
of~\cite{HMS}.  The diagonal map $P\to P\times P\to X\times P$ induces
an $X$-comodule structure on $SP$
\[
SP= \splus P\to \splus (X\times P)\iso X_{+}\sma \splus P=X_{+}\sma SP.
\]
This in turn endows $SP$ with a left $DX$-module structure
\[
DX\sma_{S} SP \to DX \sma_{S}(X_{+}\sma SP) \iso (DX \sma
X_{+})\sma_{S}SP\to S\sma_{S}SP\iso SP.
\]
This left $DX$-module structure commutes with the left $R$-module
structure, and so defines a map of $S$-algebras
\[
DX\to F_{R}(SP,SP)=E.
\]
To see that this map is a weak equivalence, consider the following diagram
\[
\xymatrix{
DX\ar[rd]_{\iso}\ar[r]&F_{R}(SP,SP)\ar[d]\\
&F_{R}(SP,S),
}
\]
where the righthand map is induced by the map $SP\to S$ (induced by
the map $P\to *$), and the slanted map is the isomorphism induced by
the isomorphism $P/G=X$.  This diagram commutes since the top-right
composite is adjoint to the map $DX \sma SP\to S$ induced by the
diagonal $P\to X\times P$ followed by 
evaluation of $DX$ on $X$ and the trivial map $P\to *$, whereas the
slanted map is induced by the map $P\to X$ followed by evaluation of
$DX$ on $X$:
\[
\xymatrix{%
DX \sma P_{+}\ar[r]\ar[d]&DX\sma X_{+}\sma P_{+}\ar[d]\\
DX \sma X_{+}\ar[r]&S.
}
\]
Since the map $F_{R}(SP,SP)\to F_{R}(SP,S)$ is a weak equivalence, the
$S$-algebra map $DX\to E$ is a weak equivalence.

We can obtain a model for the map $R\to \Ext_{E}(S,S)$ as follows.
First, it is convenient to choose a cofibrant $S$-algebra
approximation $E'\to DX$.  Then the two-sided bar construction 
$SP'=B(DX,E',SP)$ is a semi-cofibrant $DX$-module approximation of
$SP$.  Furthermore, $E'$-maps $SP\to SP$ induce $DX$-maps  
$SP'\to SP'$.  By construction, the (left) action of $R$ on $SP$
commutes with the (left) action of $DX$, making $SP'$ an $R$-module in
the category of $DX$-modules, or equivalently, producing a map of
$S$-algebras $R\to F_{DX}(SP',SP')$. 

This constructs the $S$-algebra map; we need to show that this map is
a weak equivalence.  Consider the cobar construction $C\supdot(*,X,P)$,
\[
C^{n}(*,X,P)=\underbrace{X\times \dotsb \times X}_{\text{$n$ factors}}\times P,
\]
with cosimplicial maps induced from the diagonal, the inclusion of the
basepoint, and the map $P\to X$.  The inclusion of $G$ as the fiber of the
fibration $P\to X$ induces a weak equivalence $G\to \Tot
C\supdot(*,X,P)$.  Likewise, we get a map 
\[
R\to \splus \Tot C\supdot(*,X,P)\to \Tot \splus C\supdot(*,X,P).
\]
Results of Dwyer~\cite{Dwyer-EM} and
Bousfield~\cite{BousfieldCosimplicial} (for $D_{*}=\pi^{S}_{*}$) show
that this map is a weak equivalence, as $X$ is
simply-connected.  Moreover, when $X$ is not simply-connected, the
``only if'' part of Dwyer's results show that no model of this map
will be a weak equivalence.  The map $E'\to DX$ induces weak
equivalences
\[
E'\sma_{S}\dotsb \sma_{S}E'\to D(X\times \dotsb \times X).
\]
Together with the weak equivalence of $E'$-modules $SP\to S$, these
induce weak equivalences
\begin{multline*}
\splus(X\times \dotsb \times X\times P)\iso
X_{+}\sma \dotsb \sma X_{+} \sma SP\to\\
F_{S}(E'\sma_{S}\dotsb \sma_{S}E'\sma S,SP)\to
F_{S}(E'\sma_{S}\dotsb \sma_{S}E'\sma SP,SP)\\
\iso F_{DX}(DX\sma_{S}E'\sma_{S}\dotsb \sma_{S} E'\sma_{S} SP,SP).
\end{multline*}
These maps are compatible with the cosimplicial structure on the cobar
construction and the maps induced by the simplicial structure on the
bar construction $B(DX,E',SP)$, and induce a weak equivalence on
$\Tot$.  Finally, the weak equivalence of $E'$-modules $SP'\to SP$
induces a weak equivalence $F_{DX}(SP',SP')\to F_{DX}(SP',SP)$.
This describes the maps in the following diagram.
\[
\xymatrix@R-1pc{%
R\ar[r]\ar[dd]&F_{DX}(SP',SP')\ar[d]\\
&F_{DX}(B(DX,E',SP),SP)\\
\Tot \splus C\supdot(*,X,P)\ar[r]
&\Tot F_{S}(E'\sma_{S}\dotsb \sma_{S}E',SP)\ar[u]
}
\]
We have shown all maps but the top one to be weak equivalences, and so
it suffices to observe that the diagram commutes up to homotopy.  At each
cosimplicial level, the right-down composite is adjoint to the map
\[
R\sma_{S}E'\sma_{S}\dotsb \sma_{S} E'\sma_{S}SP\to SP
\]
induced by the action of $E'$ and $R$ on $SP$.  The
down-right-up composite is adjoint to the composite map
\[
R\sma_{S}E'\sma_{S}\dotsb \sma_{S} E'\sma_{S}SP\to R\sma_{S}S\to SP
\]
induced by the augmentation $E'\to S$, the weak equivalence $SP\to S$
and the inclusion of $G$ in $P$.  A contraction $P\times I\to P$ onto
the basepoint of $P$ induces a homotopy from the former map to the
latter map.


\section{Contravariant equivalences in algebraic $K$-theory\\
and geometric {S}wan theory of spaces}\label{seccontra}

We now turn to the adjoint functors
\[
\xymatrix@C-2.75em{%
\Ext_{R}(-,S){:}\,& 
\aD_{R}\ar[rr]<.75ex>&\hspace{3.5em}
&\aD_{DX}\ar[ll]<.75ex>
&{:}\Ext_{DX}(-,S)
}%
\]
and describe our point set model for the Quillen adjunction on the
model categories $\aM_{R}$ and $\aM_{DX}$.  The easiest and most
obvious point-set model for these functors would be to use the
adjunction $F_{R}(-,SP)\colon \aM_{R}\to \aM_{DX}$ as in the previous
section, but instead we use an equivalent functor with better
multiplicative properties. 

The diagonal map $G\to G\times G$ induces a diagonal map $R\to
R\sma_{S} R$, which is clearly a map of $S$-algebras.  This endows the
category $\aM_{R}$ of $R$-modules with a symmetric monoidal product,
given by $\sma_{S}$ on the underlying $S$-modules.  As $DX$ is a
commutative $S$-algebra, the category $\aM_{DX}$ has a symmetric
monoidal product $\sma_{DX}$.  The diagonal map $SP\to SP\sma_{S}SP$
on $SP$ makes $SP$ a cocommutative coalgebra in the category of
$R$-modules and the diagonal map $SP\to SP\sma_{DX}SP$ makes $SP$ a
cocommutative coalgebra in the category of $DX$-modules.  For our
adjunctions, we need a version of $SP$ that is a commutative algebra
in both categories.

\begin{notn}
Let $\SPdual=F_{S}(SP,S)\iso S^{P_{+}}$, a left $(R\sma_{S} DX)$-module.
\end{notn}

Here $F_{S}$ (and more generally $F_{R}$ and $F_{DX}$ which we use
below) denotes the function module construction of
\cite[\S III.6.1]{EKMM}.  The commuting left $R$-module and
$DX$-module structures on $SP$ make $F_{S}(SP,S)$ naturally a right $(R\sma_{S}
DX)$-module, and we turn it into a left $(R\sma_{S} DX)$-module using
commutativity of $DX$ and the anti-involution $R\to R$ induced by the
inverse map $G\to G$.  
Using the diagonal map on $SP$, we get now a map of left $(R\sma_{S}
DX)$-modules
\[
\SPdual \sma_{DX} \SPdual \to \SPdual,
\]
which is easily seen to be associative and commutative in the appropriate sense.
Moreover, we have a zigzag of weak equivalences of left $(R\sma_{S}
DX)$-modules relating $SP$ and $\SPdual$,
\[
\SPdual \from \SPdual \sma_{S}SP \to SP,
\]
where we make $\SPdual\sma_{S}SP$ a left $(R\sma DX)$-module using the
diagonal $R$-module structure and the $DX$-module structure on
$\SPdual$.  The leftward map is induced by the map of $R$-modules
$SP\to S$, and the rightward map is induced by the diagonal on $SP$
and evaluation,
\[
\SPdual \sma_{S}SP = F_{S}(SP,S) \sma_{S}SP\to
F_{S}(SP,S) \sma_{S} SP\sma_{S}SP\to S\sma_{S} SP\iso SP.
\]
This is clearly a map of $R$-modules since each map in the composite
is, and it is a map of $DX$-modules since the $DX$-module structure on
$\SPdual$ is adjoint to the map
\begin{multline*}
DX\sma_{S}\SPdual \sma_{S} SP\to
DX \sma_{S}\SPdual \sma_{S} (X_{+}\sma SP)\\
\iso
(DX \sma X_{+}) \sma_{S} (\SPdual \sma_{S} SP)\to S
\end{multline*}
induced by the diagonal on $SP$ and evaluation.

Using the commuting left $R$-module and $DX$-module structures on
$\SPdual$, we get adjoint functors
\[
\xymatrix@C-2.75em{%
F_{R}(-,\SPdual){:}\,& 
\aM_{R}\ar[rr]<.75ex>&\hspace{3.5em}
&\aM_{DX}\ar[ll]<.75ex>
&{:}F_{DX}(-,\SPdual)
}%
\]
between the (point-set) categories of $R$-modules and $DX$-modules
modeling the $\Ext_{R}(-,S)$ and $\Ext_{DX}(-,S)$ adjunction on
derived categories.
The unit maps of this adjunction are the maps
\[
X\to F_{DX}(F_{R}(X,\SPdual),\SPdual)\qquad \text{and}\qquad 
Y\to F_{R}(F_{DX}(Y,\SPdual),\SPdual)
\]
adjoint to the ($R$-module and $DX$-module) maps
\[
M\sma_{S} F_{DX}(M,\SPdual)\to \SPdual\qquad \text{and}\qquad
N\sma_{S} F_{R}(N,\SPdual)\to \SPdual
\]
induced by evaluation.  Since fibrations and weak equivalences in EKMM
module categories are detected on the underlying $S$-modules, the functors
$F_{R}(-,\SPdual)$ and $F_{DX}(-,\SPdual)$ convert cofibrations and acyclic
cofibrations to fibrations and acyclic fibrations.  The adjunction
above is therefore a Quillen adjunction.  

We note that these functor are lax symmetric monoidal.    
We have the natural transformations
\begin{align*}
F_{R}(M_{1},\SPdual)\sma_{DX}F_{R}(M_{2},\SPdual)&\to
F_{R}(M_{1}\sma_{S} M_{2},\SPdual\sma_{DX}\SPdual)\\
&\to F_{R}(M_{1}\sma_{S} M_{2},\SPdual)\\
\intertext{and}
F_{DX}(N_{1},\SPdual) \sma_{S} F_{DX}(N_{2},\SPdual)&\to
F_{DX}(N_{1}\sma_{DX}N_{2}, \SPdual\sma_{DX}\SPdual)\\
&\to
F_{DX}(N_{1}\sma_{DX}N_{2}, \SPdual)
\end{align*}
induced by the multiplication $\SPdual \sma_{DX} \SPdual\to \SPdual$,
which is both a map of $R$-modules and of $DX$-modules.  Note that
because $G$ is a CW complex, $M_{1}\sma_{S} M_{2}$ is in fact a
cofibrant $R$-module when $M_{1}$ and $M_{2}$ are cofibrant
$R$-modules.  The lax unit natural transformations
\[
DX\to F_{R}(S,\SPdual)\iso F_{S}(SP\sma_{R} S,S)
\qquad\text{and}\qquad
S\to F_{DX}(DX,\SPdual)\iso \SPdual
\]
are induced by the identification $P/G=X$ (for the first map) and the
map of $R$-modules $SP\to S$ (for the second map).

When $M$ is a cofibrant $R$-module approximation to $SP$ (for example,
$X=SP\sma S_{c}$ for a cofibrant $S$-module approximation of $S$), we
have a weak equivalence of $DX$-modules, 
\[
DX\to E=F_{R}(SP,SP)\to F_{R}(M,SP)\simeq F_{R}(M,\SPdual).
\]
It follows that the left derived functors of
$F_{R}(-,\SPdual)$ and $F_{DX}(-,\SPdual)$ induce an equivalence
between the thick 
subcategories of the homotopy categories generated by $S$ in $\aD_{R}$
and by $DX$ in $\aD_{DX}$.  The latter is the category of compact
objects $\aD^{c}_{DX}$.  As in the introduction, we denote the former
subcategory by $\thick{R}$.

\begin{prop}
The derived functors $\Ext_{R}(-,S)$ and $\Ext_{DX}(-,S)$ induce
inverse equivalences between $\thick{R}$ and $\aD_{DX}^{c}$.
\end{prop}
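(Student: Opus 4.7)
The plan is to verify the equivalence on the generating objects $S\in \thick{R}$ and $DX\in \aD^{c}_{DX}$ and then propagate via exactness and the thick subcategory structure of $\aD_{R}$ and $\aD_{DX}$.

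First, I would identify the derived functors on generators. The paragraph preceding the proposition already exhibits a weak equivalence
\[
DX\to E=F_{R}(SP,SP)\to F_{R}(M,SP)\simeq F_{R}(M,\SPdual),
\]
for a cofibrant $R$-module approximation $M\to SP$, so $\Ext_{R}(S,S)\simeq DX$; dually, for a cofibrant $DX$-module approximation $(DX)^{c}\to DX$, the chain
\[
F_{DX}((DX)^{c},\SPdual)\to F_{DX}(DX,\SPdual)\iso \SPdual\simeq S
\]
gives $\Ext_{DX}(DX,S)\simeq S$. Thus both derived functors swap the generators. Moreover, both functors are contravariant triangulated (sending cofiber sequences to fiber sequences) and preserve retracts, and combining this with the generator identification yields $\Ext_{R}(-,S)(\thick{R})\subseteq \aD^{c}_{DX}$ and $\Ext_{DX}(-,S)(\aD^{c}_{DX})\subseteq \thick{R}$.

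Second, the same exactness implies that the subcategories
\[
\aU=\{M\in \aD_{R}\mid M\to \Ext_{DX}(\Ext_{R}(M,S),S)\text{ is an equivalence}\}
\]
and the analogous $\aV\subseteq \aD_{DX}$ cut out by the counit are thick. It therefore suffices to verify $S\in \aU$ and $DX\in \aV$. Under the identifications $\Ext_{R}(S,S)\simeq DX$ and $\Ext_{DX}(DX,S)\simeq S$, the unit map $S\to \Ext_{DX}(\Ext_{R}(S,S),S)$ corresponds under adjunction to the identity on $\Ext_{R}(S,S)$, and hence is a weak equivalence; a symmetric argument handles the counit on $DX$.

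The main obstacle will be this last step: one must confirm that the derived unit, after translation through the zigzag $\SPdual\from \SPdual\sma_{S}SP\to SP$ of $(R\sma_{S}DX)$-modules and the identification $F_{R}(M,\SPdual)\simeq DX$, really does reduce on the nose to the identity. The compatibility of the pairing $\SPdual\sma_{DX}\SPdual\to \SPdual$ and the unit $S\to \SPdual$ with the diagonal and augmentation on $SP$ established in Section~\ref{secmodels} is the data that makes this verification routine.
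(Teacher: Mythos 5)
Your argument is correct and is essentially the paper's: the paper likewise establishes the weak equivalence $DX\to F_{R}(M,\SPdual)$ on the generator (and the identification $F_{DX}(DX,\SPdual)\iso\SPdual\simeq S$ on the other side) and then asserts the equivalence of thick subcategories, leaving the thickness and unit/counit bookkeeping implicit. The one caution is that ``the unit is adjoint to the identity'' holds for any adjunction and does not by itself yield an equivalence --- the real content is the generator computation together with the zigzag compatibility you correctly flag at the end, which is exactly what the constructions of Section~\ref{secmodels} supply.
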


Likewise, when $N$ is a cofibrant $DX$-module approximation to $SP'$,
we have a weak equivalence of $R$-modules 
\[
R\to F_{DX}(SP',SP')\to F_{DX}(SP',SP) \simeq F_{DX}(SP',\SPdual)\to F_{DX}(N,\SPdual).
\]
It follows that 
the left derived functors of
$F_{R}(-,\SPdual)$ and $F_{DX}(-,\SPdual)$ induce an equivalence between the thick
subcategories of the homotopy categories generated by $S$ in $\aD_{DX}$
and by $R$ in $\aD_{R}$.  The latter is the category of compact
objects $\aD^{c}_{R}$.  As in the introduction, we denote the former
subcategory by $\thick{DX}$.

\begin{prop}
The derived functors $\Ext_{R}(-,S)$ and $\Ext_{DX}(-,S)$ induce
inverse equivalences between $\aD_{R}^{c}$ and $\thick{DX}$.
\end{prop}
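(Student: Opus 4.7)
The plan is to adapt the proof of the preceding proposition, now using the weak equivalence $R \to F_{DX}(N,\SPdual)$ established in the paragraph immediately before the statement. The required inputs are the values of $F_R(-,\SPdual)$ and $F_{DX}(-,\SPdual)$ at the distinguished generators $R$ and $S$, the exactness of the derived functors, and the double centralizer conditions of Section~\ref{secmodels}.

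First I would verify the behavior on generators. For the generator $R$ of $\aD^c_R$, we have $F_R(R,\SPdual) = \SPdual$. The zigzag $\SPdual \from \SPdual \sma_{S} SP \to SP$ displayed earlier in this section exhibits $\SPdual$ as weakly equivalent to $SP$ as an $(R\sma_{S} DX)$-module, and $SP$ is in turn weakly equivalent to $S$ as a $DX$-module (the coaction $P \to X \times P$ is null-homotopic because $P$ is contractible, so up to homotopy it factors through the basepoint, yielding the augmentation action). Thus $\Ext_R(R,S) \simeq S$ lies in $\thick{DX}$. For the generator $S$ of $\thick{DX}$, represented by a cofibrant $DX$-module approximation $N \to SP'$, the paragraph preceding the statement gives $F_{DX}(N,\SPdual) \simeq R$, so $\Ext_{DX}(S,S) \simeq R$ lies in $\aD^c_R$.

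Second, since $F_R(-,\SPdual)$ and $F_{DX}(-,\SPdual)$ are a Quillen adjunction between stable model categories, their left derived functors are exact contravariant functors of triangulated categories and preserve retracts. Hence they carry thick subcategories to thick subcategories; combined with the calculation on generators, this shows that the derived functors restrict to functors $\aD^c_R \to \thick{DX}$ and $\thick{DX} \to \aD^c_R$.

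Third, I would check that the unit and counit of the derived adjunction are weak equivalences on these subcategories. The full subcategory of $\aD^c_R$ on which the unit $X \to \Ext_{DX}(\Ext_R(X,S),S)$ is an isomorphism is thick, and it contains $R$: up to the identifications above, the unit at $R$ corresponds to the double centralizer map $R \to F_{DX}(SP',SP')$ of Section~\ref{secmodels}, which was shown to be a weak equivalence. Symmetrically, the counit at $S$ corresponds to the weak equivalence $DX \to F_R(SP,SP)$ of Section~\ref{secmodels}, so the counit is an isomorphism throughout $\thick{DX}$. The main obstacle is the bookkeeping in this final step: one must carefully track the zigzags relating $SP$, $SP'$, $\SPdual$, and their cofibrant replacements in order to identify the abstractly defined adjunction unit and counit with the concrete double centralizer maps produced in Section~\ref{secmodels}.
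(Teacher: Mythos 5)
Your proposal follows essentially the same route as the paper: the paper's entire argument for this proposition is the preceding display establishing the weak equivalence $R\to F_{DX}(N,\SPdual)$ together with the (implicit) observation that $F_{R}(R,\SPdual)=\SPdual\simeq S$, after which the standard thick-subcategory argument you spell out in your second and third steps is left to the reader. One small correction to your final step: for \emph{this} proposition the unit at the generator $S$ of $\thick{DX}$ is $S\to \Ext_{R}(\Ext_{DX}(S,S),S)\simeq F_{R}(R,\SPdual)\cong\SPdual\simeq S$, which is an equivalence essentially by inspection; the map $DX\to F_{R}(SP,SP)$ that you cite is the double centralizer map relevant to the companion proposition relating $\thick{R}$ and $\aD^{c}_{DX}$, not to this one. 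The misattribution does not affect the conclusion, since the map actually needed is the easier of the two.
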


We obtain Waldhausen category structures modeling each of the
subcategories $\aD^{c}_{R}$, $\thick{R}$ in $\aD_{R}$ and
$\aD^{c}_{DX}$, $\thick{DX}$ in $\aD_{R}$ as follows.  We consider the
full subcategory of cofibrant objects in the model category of
$R$-modules or $DX$-modules whose image in the homotopy category lies
in the subcategory.  (To make these categories small, we can fix a set
$\mathfrak{X}$ of sufficiently large cardinality and restrict to objects
whose point-sets are subsets of $\mathfrak{X}$ as in
\cite[1.7]{BlumbergMandell}.)  We denote these Waldhausen categories
as $\aM^{c}_{R}$, $\model{R}$, $\aM^{c}_{DX}$, and $\model{DX}$,
respectively.  We then get associated $K$-theory spectra, including
Waldhausen's algebraic $K$-theory of $X$ and the geometric Swan theory
of $X$.

\begin{defn}
$A(X)=K(R)=K(\aM^{c}_{R})$, $G(X)=K(\model{R})$, $K(DX)=K(\aM^{c}_{DX})$.
\end{defn}

The biexact smash product $\sma_{S}$ makes $G(X)$ into an
$E_{\infty}$ ring symmetric spectrum and the biexact smash product
$\sma_{DX}$ makes $K(DX)$ into an $E_{\infty}$ ring symmetric
spectrum by Theorem~\ref{thm:multstruct}.  Likewise, the biexact
functors $\sma_{S}$ and $\sma_{DX}$ 
make $A(X)$ into a module over $G(X)$ and $K(\model{DX})$ into a
module over $K(DX)$.  We next explain how the functors $F_{R}(-,\SPdual)$
and $F_{DX}(-,\SPdual)$ induce weak equivalences of these $E_{\infty}$ ring
symmetric spectra and modules.

Although the functors $F_{R}(-,\SPdual)$ and $F_{DX}(-,\SPdual)$ are not exact
(and do not land in the model Waldhausen categories), we do
immediately obtain weak equivalences of spectra $K(DX)\to G(X)$ and
$K(\model{DX})\to A(X)$, using the $\Spdot$ construction, a 
homotopical variant of the $\Sdot$ construction introduced
in~\cite{BlumbergMandell}.  Rather than working with pushouts over
cofibrations, the $\Spdot$ construction depends on a theory of
``homotopy cocartesian'' squares.  The $\Spdot$ construction replaces
the cofibrations and pushouts in $\Sdot$ with homotopy cocartesian
squares.  Under mild hypotheses~\cite[App.~A]{BlumbergMandell2}, the
natural inclusion $\Sdot \aC \to \Spdot \aC$ is a weak equivalence.
To study the products and pairings, we take a different approach that
allows us to continue working only with objects that are cofibrant.

First consider the categories $\model{R}$ and $\aM^{c}_{DX}$.  For each
$q$, $n_{1},\dotsc, n_{q}$, consider the category whose objects consist of an
element $A$ of $\Sdotq[n_{1},\dotsc,n_{q}]\model{R}$, an element $B$
of $\Sdotq[n_{1},\dotsc,n_{q}]\aM^{c}_{DX}$ and weak equivalences
\[
\phi_{i_{1},j_{1};\dotsb;i_{q},j_{q}}\colon 
A_{i_{1},j_{1};\dotsb;i_{q},j_{q}}\to
F_{DX}(B_{n_{1}-j_{1},n_{1}-i_{1};\dotsb;n_{q}-j_{q},n_{q}-i_{q}},\SPdual)
\]
making the $\Ar[n_{1}]\times \dotsb \times \Ar[n_{q}]$ diagram
commute.  A map $(A,B,\phi)$ to $(A',B',\phi')$ consists of weak
equivalences $A\to A'$, $B\to B'$ 
such that the composite 
\[
A_{*}\to A'_{*}\overto{\phi'}F_{DX}(B'_{*},\SPdual)\to F_{DX}(B_{*},\SPdual)
\]
is $\phi_{*}$.  This forms a multi-simplicial category, where we use
the opposite ordering in each simplicial direction on
$\Sdotq\aM^{c}_{DX}$.  Taking the classifying space, we obtain a
sequence of spaces $T(q)$ with the structure of a symmetric spectrum.

The smash products on $\aM_{R}$ and $\aM_{DX}$ and lax symmetric
monoidal transformations above induce maps
\[
T(p)\sma T(q)\to T(p+q)
\]
and a multiplication $T\sma T\to T$.  We obtain a map $T\to G(X)$ 
dropping the $\aM^{c}_{DX}$ data; we also obtain a map $T\to
K(DX)$ by dropping the $\model{R}$ data and using the
canonical homeomorphism between the geometric realization of a
simplicial set and its opposite.  Both maps preserve the $E_{\infty}$
structures; Lemmas~\ref{lemquila} and~\ref{lemcompSdot} below complete
the proof of Theorem~A and 
the first part of Theorem~C by showing that these maps are are weak
equivalences.   

The analogous construction, with $\aM^{c}_{R}$ and $\model{DX}$ in
place of $\model{R}$ and $\aM^{c}_{DX}$, produces a symmetric
spectrum $U$ that is a module over $T$.  The analogous maps $U\to
A(X)$ and $U\to K(\model{DX})$ are $T$-modules maps; again
Lemmas~\ref{lemquila} and~\ref{lemcompSdot} show that these maps are
weak equivalences and 
complete the proof of Theorem~B and the remaining part of Theorem~C.

Before stating Lemma~\ref{lemquila}, we abstract the construction used
to build the pieces of $T$ and $U$.  Consider the following construction.

\begin{cons}\label{consmiddle}
Let $\aC$ be a Waldhausen category, $\baM$ be a pointed closed model category
and let $\aM$ be a closed Waldhausen subcategory of cofibrant objects in
$\baM$, i.e., a Waldhausen category under the cofibrations and weak
equivalences from $\baM$, which is closed under weak equivalences in
$\baM$.  Let 
$F\colon \aC\to \baM$ be a contravariant functor that takes $*$ to
$*$, cofibrations to fibrations, and weak
equivalences to weak equivalences.  Define $MF$ to be the following
category.  An object of $MF$ consists of an object $A$ of $\aM$, an
object $B$ of $\aC$ and a weak equivalence $\phi \colon A\to FB$.  A map in $MF$
from $(A,B,\phi)$ to $(A',B',\phi')$ consists of weak equivalences
$A\to A'$ and $B\to B'$ such that the composite map
\[
A\to A'\overto{\phi'}FB'\to FB
\]
is $\phi$.  We have canonical functors $MF\to \w\aC$ and $MF\to \w\aM$
obtained by dropping the $\baM$ and $\aC$ data, respectively.
\end{cons}

\begin{lem}\label{lemquila}
With notation as above:
\begin{enumerate}
\item If every object $A$ of $\aC$,
$FA$ is weakly equivalent in $\baM$ to an object of $\aM$, then the
functor $MF\to \w\aC$ induces a weak 
equivalence on nerves. 
\item If $\aC$ is a closed Waldhausen subcategory of
cofibrant objects in a closed model category $\baC$ and $F$ is a left
Quillen adjoint that induces an equivalence between the full
subcategories of $\Ho\baC$ and $\Ho\baM$ generated by $\aC$ and $\aM$,
then $MF\to \w\aM$ also induces a weak equivalence on nerves.
\end{enumerate}
\end{lem}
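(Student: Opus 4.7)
The plan is to apply Quillen's Theorem A (or Theorem B) to each of the two projection functors out of $MF$: the functor $MF\to\w\aC$ sending $(A,B,\phi)\mapsto B$ for part~(i), and the functor $MF\to\w\aM$ sending $(A,B,\phi)\mapsto A$ for part~(ii). In each case we identify the strict fiber over an object as a category of weak equivalences and argue it is contractible.

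For part~(i), the fiber over $B\in\w\aC$ is the category of pairs $(A,\phi)$ with $A\in\aM$ and $\phi\colon A\to FB$ a weak equivalence, the morphisms being weak equivalences $A\to A'$ in $\aM$ commuting strictly over $FB$. One of Waldhausen's axioms guarantees that $*\to B$ is a cofibration, so $F$ takes it to a fibration and $FB$ is fibrant in $\baM$; together with the hypothesis that $FB$ is weakly equivalent to some $A_{0}\in\aM$ and the standard model-categorical fact that a weak equivalence from a cofibrant object to a fibrant object can be represented by a direct map, this shows the fiber is nonempty. Contractibility of the fiber then follows from the Dwyer--Kan fact that the category of cofibrant approximations to a fixed fibrant object has contractible nerve; the restriction to approximations in $\aM$ preserves contractibility because $\aM$ is closed under weak equivalences in $\baM$, giving enough flexibility to compare any two approximations. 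To conclude via Theorem~B, I would verify that any weak equivalence $B\to B'$ in $\aC$ induces a weak equivalence $FB'\to FB$ in $\baM$ and hence a weak equivalence of the corresponding fiber categories.

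For part~(ii), an analogous argument handles the projection $MF\to\w\aM$. The hypothesis that $F$ is a left Quillen adjoint inducing an equivalence between the subcategories of $\Ho\baC$ and $\Ho\baM$ generated by $\aC$ and $\aM$ implies that every $A\in\aM$ is weakly equivalent in $\baM$ to $FB_{A}$ for some $B_{A}\in\aC$, obtained by applying the derived right adjoint to $A$ and cofibrantly replacing in $\baC$. As in part~(i), fibrancy--cofibrancy considerations then straighten the zigzag into a direct weak equivalence, so the fiber of $MF\to\w\aM$ over $A$ is nonempty. Contractibility follows from the cofibrant-approximation argument applied in $\baC$, now using that $\aC$ is closed under weak equivalences among the cofibrant objects of $\baC$, and the compatibility hypothesis for Theorem~B is checked exactly as before.

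The main technical obstacle is rigorously establishing contractibility of these fiber categories. This reduces to the well-known but nontrivial fact that the category of cofibrant approximations to a fixed fibrant object is contractible; the essential additional point is that the closure-under-weak-equivalences hypotheses built into Construction~\ref{consmiddle} (and analogously for $\aC$ in part~(ii)) let the restricted categories of approximations in $\aM$ or $\aC$ inherit this contractibility, completing the verification of the hypotheses of Theorem~A (or~B) and hence the weak equivalence of nerves in each case.
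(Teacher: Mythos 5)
Your proposal is correct and follows essentially the same route as the paper: Quillen's Theorem A/B applied to the projections out of $MF$, with the fibers (or comma categories) identified as Dwyer--Kan categories of cofibrant approximations of a fibrant object --- $FB$ in part (i), and, via the contravariant adjoint $G$ of $F$, the object $GA$ in part (ii) --- which are contractible, the closure-under-weak-equivalence hypotheses ensuring these restricted approximation categories coincide with the full ones. The one step worth making explicit in part (ii), since it is the entire content of the paper's proof of that part and is what actually identifies the fiber over $A$ with the category of cofibrant approximations of $GA$ in $\aC$, is that under the stated hypotheses a map $\phi\colon A\to FB$ is a weak equivalence if and only if its adjoint $B\to GA$ is.
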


\begin{proof}
For the first statement, we apply Quillen's Theorem~A.  
For an object $B$ of $\aC$, the relevant category
$FM\downarrow B$ has
objects the maps $\phi \colon A\to FC$, $\gamma \colon C\to B$ where
$A$ is a cofibrant object in $\baM$, $C$ is an object in $\aC$, and
$\phi$ and $\gamma$ are weak equivalences.  The nerve of this
category is equivalent to the nerve of the subcategory where $C=B$ and
$\gamma$ is the identity.  This is the category of cofibrant
approximations of the fibrant object $FC$; work of Dwyer-Kan
(cf.~\cite[6.12]{DKModel}) shows that the nerve of this category is
contractible.

For the second statement, let $G$ denote the contravariant left
adjoint of $F$.  Then under the hypotheses of the second statement, a
map $A\to FB$ is a weak equivalence if and only if the adjoint map
$B\to GA$ is a weak equivalence.  The second statement now follows
from the first.
\end{proof}

In the case considered above, we are looking at functors $F$ of the
form
\[
\Sdotq\model{DX}\to \Ar[\seqdot](\aM_{R})
\quad \text{or} \quad 
\Sdotq\aM^{c}_{DX}\to \Ar[\seqdot](\aM_{R}),
\]
where we have written $\Ar[\seqdot](\aC)$ for the category of functors
from $\Ar[\seqdot]$ to $\aC$ (where $\Ar[n_{1},\dotsc,n_{q}]$ is as in
Construction~\ref{consitsdot}).
Both $\Sdotq\model{DX}$ and $\Sdotq\aM^{c}_{DX}$ are closed Waldhausen
subcategories of the cofibrant objects in\break
$\Ar[\seqdot](\aM_{DX})$.  Since every map in $\Ar[\seqdot](\aM_{R})$
is weakly equivalent to a cofibration, and a commuting square in
$\aM_{R}$ is a homotopy pushout square if and only if it is a homotopy
pullback square if and only if it is weakly equivalent to a pullback
square of fibrations, an easy inductive argument proves the following
lemma. 

\begin{lem}\label{lemcompSdot}
The functor $F_{DX}(-,\SPdual)$ induces equivalences between:
\begin{enumerate}
\item The full subcategory of the homotopy category of
$\Ar[n_{1},\dotsc,n_{q}](\aM_{DX})$ generated by objects of
$\Sdotq[n_{1},\dotsc,n_{q}]\aM^{c}_{DX}$, and
\item the full subcategory of the homotopy category of
$\Ar[n_{1},\dotsc,n_{q}](\aM_{R})$ generated by objects of
$\Sdotq[n_{1},\dotsc,n_{q}]\model{R}$.
\end{enumerate}
It also induces equivalences between:
\begin{enumerate}
\item The full subcategory of the homotopy category of
$\Ar[n_{1},\dotsc,n_{q}](\aM_{DX})$ generated by objects of
$\Sdotq[n_{1},\dotsc,n_{q}]\model{DX}$, and 
\item the full subcategory of the homotopy category of
$\Ar[n_{1},\dotsc,n_{q}](\aM_{R})$ generated by objects of
$\Sdotq[n_{1},\dotsc,n_{q}]\aM^{c}_{R}$.
\end{enumerate}
\end{lem}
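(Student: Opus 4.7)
The plan is to prove the lemma by induction on the multi-dimension $(n_{1},\dotsc,n_{q})$, with the base case reducing to the already-established equivalences $\thick{R}\simeq \aD^{c}_{DX}$ and $\aD^{c}_{R}\simeq \thick{DX}$ from the two preceding propositions. The key tool enabling the induction is the stability of the EKMM module categories $\aM_{R}$ and $\aM_{DX}$: a commuting square in either category is homotopy cocartesian if and only if it is homotopy cartesian, and as noted in the hint, both conditions are equivalent to being weakly equivalent to a pullback square of fibrations. This is precisely the wiggle room needed to move the dual of a cubically cofibrant diagram back into a cubically cofibrant form after applying $F_{DX}(-,\SPdual)$.

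First, I would verify that $F_{DX}(-,\SPdual)$ converts the structure defining $\Sdotq$ diagrams in one category into the homotopy-theoretic analogue of that structure in the other. Since $\SPdual$ is fibrant and $F_{DX}(-,\SPdual)$ is a right Quillen adjoint, pushout squares of cofibrations in a cubically cofibrant diagram are sent to pullback squares of fibrations; by stability these are also homotopy cocartesian squares in the target category. Iterating, the cubical cofibrancy conditions of Construction~\ref{consitsdot} are preserved up to weak equivalence, and the full pushout condition on $\Sdot$ diagrams is carried to its homotopy-pushout analogue in the target.

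Next, for essential surjectivity at multi-dimension $(n_{1},\dotsc,n_{q})$, given $B$ in $\Sdotq[n_{1},\dotsc,n_{q}]\aM^{c}_{DX}$, I would apply $F_{DX}(-,\SPdual)$ objectwise to obtain a diagram of fibrations in $\aM_{R}$, then use functorial cofibrant replacement in the diagram category $\Ar[n_{1},\dotsc,n_{q}](\aM_{R})$ (together with the hint that every map in this category is weakly equivalent to a cofibration) to produce a weakly equivalent diagram in $\Sdotq[n_{1},\dotsc,n_{q}]\model{R}$; the equivalence on thick subcategories guarantees that each vertex lies in the correct thick subcategory. Fullness and faithfulness at the homotopy-category level then follow from the adjunction identities together with the fact that morphism sets in the homotopy category of a diagram category can be computed via cofibrant-fibrant replacement, reducing inductively to the single-object equivalences of the propositions. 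The other three correspondences follow by the same argument with $\model{DX}\leftrightarrow \aM^{c}_{R}$ or by invoking the corresponding properties of $F_{R}(-,\SPdual)$.

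The main obstacle is the bookkeeping: verifying that ``cubically cofibrant in $\aM_{R}$ plus the $\Sdot$ pushout conditions'' corresponds under $F_{DX}(-,\SPdual)$ to a diagram that becomes, after cofibrant replacement, cubically cofibrant in $\aM_{DX}$ with the dual pushout conditions. Conceptually this is clean thanks to stability, but carrying the induction through an $m$-dimensional subcube (as in Definition~\ref{defcofcube}(iii)) requires a careful simultaneous induction on $q$ and on each $n_{k}$, checking at each stage that the newly introduced pushout/pullback matches the homotopy cocartesian/cartesian condition on the other side.
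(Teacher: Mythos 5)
Your proposal is correct and matches the paper's intended argument: the paper leaves this as ``an easy inductive argument'' resting on exactly the two facts you use, namely that every map in $\Ar[n_{1},\dotsc,n_{q}](\aM_{R})$ is weakly equivalent to a cofibration and that, by stability, a square is homotopy cocartesian iff it is homotopy cartesian iff it is weakly equivalent to a pullback of fibrations, with full faithfulness reducing objectwise to the two preceding propositions. The only detail worth making explicit in your write-up is that $F_{DX}(-,\SPdual)$ is contravariant, so the induced diagram is indexed on $\Ar[n_{1},\dotsc,n_{q}]^{\op}$ and must be reindexed via $(i,j)\mapsto(n-j,n-i)$, as in the construction of $T(q)$.
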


\section{Covariant equivalences in algebraic $K$-Theory\\
and geometric {S}wan theory of spaces}\label{secinvolution}

Using the models described in Section~\ref{secmodels}, the generalized
Morita theory of~\cite{DwyerGreenleesIyengar} admits a point-set refinement into adjoint
pairs of covariant functors
\[
\xymatrix@C-2.75em{%
F_{R}(SP,-){:}\,& 
\aM_{R}\ar[rr]<-.75ex>&\hspace{3.5em}
&\aM_{DX}\ar[ll]<-.75ex>
&{:}(-) \sma_{DX} SP
}%
\]
and
\[
\xymatrix@C-2.75em{%
(-) \sma_{R} SP' {:}\,& 
\aM_{R}\ar[rr]<.75ex>&\hspace{3.5em}
&\aM_{DX}\ar[ll]<.75ex>
&{:} F_{DX}(SP',-),
}%
\]
forming Quillen adjunctions.  Here we switch between left and
right modules at will using the commutativity of $DX$ and the
anti-involution on $R$ (induced by the inverse map on the topological
group $G$).

Since $S$ is compact in $\aD_{R}$, the first
adjunction induces an equivalence between the localizing subcategory
of $\aD_{R}$ generated by $S$ and $\aD_{DX}$, and, in particular, it
restricts to an equivalence between $\thick{R}$ and $\aD^{c}_{DX}$.
In general, $S$ is not compact in $\aD_{DX}$, but nonetheless the
second adjoint pair yields an equivalence between $\thick{DX}$ and
$\aD^{c}_{R}$.  In this case, one of the functors in each pair is
exact, and so Waldhausen's approximation theorem (or the more general
formulations of \cite{BlumbergMandell2} or
\cite{ToenVezzosi}) implies that these equivalences induce equivalences
\[
K(DX) \to G(X)\qquad \text{and}\qquad 
A(X) \to K(\model{DX}).
\]

Combining these equivalences with the equivalences of the previous
section, we obtain self-homotopy equivalences on $A(X)$ and $G(X)$.
We complete our analysis by identifying these as the standard
Spanier-Whitehead duality involution on $A(X)$ and an analogous
involution on $G(X)$. (Note that when $X$ is a smooth manifold, this
involution is generally not compatible with the involution on
pseudo-isotopy theory unless $X$ is parallelizable \cite{Vogell1126}.)
Roughly, the involution on $A(X)$ is 
given by the functor which takes a left $R$-module $M$ to the right
$R$-module $\Ext_{R}(M,R)$, which we transform into a left $R$-module
via the anti-involution $R\to R^{\op}$.   For $G(X)$,
the involution is similar but with $\Ext_{S}(M,S)$ instead.

Because the duality maps are contravariant, it is convenient to work
with Waldhausen categories modeling the opposite categories of
$\aD_{R}^{c}$ and $\thick{R}$.  As observed in
\cite[\S1]{BlumbergMandell}, $\aM^{\op}_{R}$ has the structure of a
Waldhausen category with weak equivalences the maps opposite to the
usual weak equivalences and cofibrations the maps opposite to the Hurewicz
fibrations.  Let $\aM^{\op,c}_{R}$ be the full subcategory of objects
that are opposite to compact objects in $\aD^{c}_{R}$, and let
$\opmodel{R}$ be the full subcategory of $\aM^{\op}_{R}$ opposite to
objects in $\thick{R}$ (again, we can make these latter two Waldhausen
categories small by restricting to subsets of a set with high
cardinality).  The argument for \cite[1.1]{BlumbergMandell} (see
discussion following \cite[2.9]{BlumbergMandell}) provides weak
equivalences
\[
A(X)=K(\aM^{c}_{R})\simeq K(\aM^{\op,c}_{R})
\qquad \text{and}\qquad
G(X)=K(\model{R}) \simeq K(\opmodel{R}).
\]
Essentially the map on $\Sdot[n]$ sends $A=\{A_{i,j}\}$ to
$A'=\{A'_{i,j}\}$ where $A'_{i,j}\simeq A_{n-j,n-i}$ and the
pushouts over cofibrations have been replaced by equivalent pullbacks
over fibrations.

The functors $F_{R}(-,R)\colon \aM^{c}_{R}\to \aM^{\op,c}_{R}$
and $F_{S}(-,S)\colon \model{R}\to \opmodel{R}$ are then exact.
Under the equivalences above, the induced maps on $K$-theory represent
the canonical involution.  Thus, it now suffices to compare our
composite functors to these functors. 

In the case of $A(X)$, the composite of our equivalences is the functor
$\aM^{c}_{R} \to \aM^{\op,c}_{R}$
defined as
\[
M \mapsto F_{DX}(M \sma_{R} SP', \SPdual).
\]
By adjunction, this is naturally isomorphic to $F_{R}(-,
F_{DX}(SP',\SPdual))$.  The weak equivalence $R\to F_{DX}(SP',\SPdual)$ then
induces a natural weak equivalence from the duality functor $F_{R}(-,R)$.

For $G(X)$, the argument above shows that the composite map on
$K(DX)\to K(\aM^{\op,c}_{DX})$
is the functor $F_{R}(-\sma_{DX}SP,\SPdual)$ and is naturally weakly
equivalent to the duality map $F_{DX}(-,DX)$.  On 
the other hand 
$F_{R}(SP,-)\colon \opmodel{R}\to \aM^{\op,c}_{DX}$ is exact and the
following solid arrow diagram commutes up to natural isomorphism.
\[
\xymatrix@R+1.5pc@C+3pc{%
\model{R}\ar@{.>}[r]^{F_{S}(-,S)}
&\opmodel{R}\ar[d]^{F_{R}(SP,-)}\\
\aM^{c}_{DX}\ar[ur]^{F_{DX}(-,\SPdual)\ }\ar[r]_{F_{R}(-\sma_{DX}SP,\SPdual)}
\ar@{.>}[u]^{(-)\sma_{DX}SP}
&\aM^{\op,c}_{DX}
}
\]
The composite of the dotted arrows is the functor
$F_{S}((-)\sma_{DX}SP,S)$.  By the smash-function adjunction, we see
that this functor is naturally isomorphic to 
$F_{DX}(-,F_{S}(SP,S))$, which is the diagonal arrow since
$\SPdual=F_{S}(SP,S)$. 


\bibliographystyle{plain}

\end{document}